\documentclass[final,3p,times,authoryear]{elsarticle}

\usepackage{comment}

\usepackage[english]{babel}
\usepackage{afterpage}

\usepackage{amsmath,amssymb,amsthm,amsfonts,mathtools,bbm,mathrsfs}
\usepackage{siunitx}
\sisetup{detect-all, per-mode=symbol}

\usepackage{booktabs,longtable,multirow,caption}
\captionsetup[longtable]{width=\linewidth}
\usepackage{pdflscape,float}

\usepackage{graphicx,xcolor}
\usepackage{subcaption}

\usepackage{algorithm}
\usepackage{algpseudocode}

\usepackage{ragged2e,setspace,titlesec,framed}
\usepackage[normalem]{ulem}
\usepackage[title]{appendix}
\usepackage{cancel}

\usepackage{hyperref}
\hypersetup{colorlinks=true, allcolors=blue}
\usepackage{placeins}

\usepackage{todonotes}
\setlength{\marginparwidth}{4cm}

\newtheorem{proposition}{Proposition}
\newtheorem{lemma}{Lemma}

\newcommand{\indicator}{\mathbbm{1}}
\newcommand{\expect}{\mathbb{E}}
\newcommand{\prob}{\mathbb{P}}

\newcommand{\var}{\operatorname{VaR}}
\newcommand{\cvar}{\operatorname{CVaR}}

\makeatletter
\def\ps@pprintTitle{%
  \let\@oddhead\@empty
  \let\@evenhead\@empty
  \let\@oddfoot\@empty
  \let\@evenfoot\@oddfoot
}
\makeatother

\begin{document}

\begin{frontmatter}



\title{Risk-Averse Markov Decision Processes: Applications to Electricity Grid and Reservoir Management}



\author[1]{Arash Khojaste\corref{cor1}}
\ead{akhojaste@umass.edu}

\author[1]{Jonathan Pearce}
\ead{jonathanpear@umass.edu}
\author[2]{Daniela Pucci de Farias}
\ead{danielinha@gmail.com}
\author[3]{Geoffrey Pritchard}
\ead{g.pritchard@auckland.ac.nz}
\author[1]{Golbon Zakeri}
\ead{gzakeri@umass.edu}

\cortext[cor1]{Corresponding author}

\address[1]{Dept. of Mechanical and Industrial Engineering, University of Massachusetts Amherst, MA, USA}
\address[2]{School of Business, Universidad Torcuato Di Tella, Argentina}
\address[3]{Dept. of Statistics, University of Auckland, Auckland, New Zealand}

    \begin{abstract}
        This paper develops risk-averse models to support system operators in planning and operating the electricity grid under uncertainty from renewable power generation. We incorporate financial risk hedging using conditional value at risk (CVaR) within a Markov Decision Process (MDP) framework and propose efficient, exact solution methods for these models. In addition, we introduce a power reliability-oriented risk measure and present new, computationally efficient models for risk-averse grid planning and operations.
    \end{abstract}



    \begin{keyword}
        stochastic processes \sep risk \sep Markov decision processes (MDP) \sep Conditional Value at Risk (CVaR) \sep offshore wind power \sep electricity grid reliability
    \end{keyword}

\end{frontmatter}

\section{Introduction}
\label{intro}

Renewable energy sources are expected to have a high share in the future electricity grids. They have become popular due to their near-zero marginal costs and zero direct emissions. However, one major challenge with renewable energy sources is that they are intermittent, as the amount of power they generate depends on weather conditions. This intermittency can reduce the overall resilience of power systems, especially during extreme events that can disrupt energy generation and distribution \citep{Suresh2024,MacKenzie2024IowaThreats}. Because of this uncertainty, a backup/firming fleet is needed to ensure grid reliability. These backup sources can be thermal energy sources, like natural gas, or energy storage systems, like batteries. However, both options face operational limitations. For example, thermal generators can only be ramped up or down by a limited amount within a specified time frame. Similarly, storage has to comply with maximum charge and discharge rates. Knowing the behavior pattern of renewable sources greatly benefits system operators as it will help them build an operating plan for running the backup system, considering its limitations. However, in electricity grids that mainly depend on renewable energy sources, especially wind and solar, ensuring supply adequacy requires more than simply aligning installed generation capacity with peak demand and a reserve resource \citep{Stenclik}.
System operators need new tools, alongside backup energy resources, to help them balance the demand with the variability of renewable energy production \citep{Mortimer}.

Investment decisions require reliable estimates of operational costs and a near-optimal operating plan for how to run the system. This need motivates our use of the Markov Decision Process (MDP) framework. In general, optimization problems related to finding an operating plan for the ramping plant generators are mostly based on the objective of minimizing the \textit{expected cost}, which is the long-run average cost of the system. Meanwhile, tail risks can disproportionately impact (instantaneous/pointwise) cost and/or reliability and should be included in resource adequacy analysis \citep{Stenclik2}. System operators require risk-averse decision-making by choosing the optimal policy to run the backup system in a way that is not only cost-efficient but also meets predefined reliability targets; e.g., Loss-of-Load Probability \citep{Mortimer}.

In our analysis, we develop methods that reflect the cost of the system when the operator hedges against the risk.
Building upon our previous study \citep{QFR}, which explored how to run the backup system to minimize costs on average, we develop risk-averse models in this paper, hedging against the worst-case costs of a backup system.

In our model, losses in the electricity system arise from two primary sources: the expenses incurred from operating the available backup system and the costs associated with demand curtailment. We have developed a risk model that captures these costs in terms of how they translate to overall system loss. This model provides a trade-off between fuel costs and the cost of the demand that will be curtailed. Although one might assume that increasing the penalty of demand curtailment can fully capture its impacts, this approach falls short of addressing the complexity of tail risks and consecutive curtailments, which can be significantly more detrimental to end consumers. Such concerns motivate alternative approaches, which we explore through different variations of risk models in this paper. These variations consist of incorporating Conditional Value-at-Risk (CVaR) and implementing strategies to prevent consecutive demand curtailment.

Recent work has studied investment decision problems and risk-averse planning in several domains. Some examples include security and cybersecurity \citep{Akbari2025,Shojaeighadikolaei2025}, transportation \citep{Zeigham2025RiskPavement},
robotics \citep{Khass2025NBV}, finance \citep{Saghezchi2024}, and supply chain management \citep{Sobhani2019AvailabilityOptimization}.
In particular, managing risk in stochastic energy systems has been studied extensively. For example, \cite{Nasr} presents an approach that considers the uncertainties of photovoltaic power generation and demand by solving a bi-level multi-objective optimization problem using information gap decision theory in microgrids. This non-probabilistic approach does not require knowledge of the probability density functions for uncertain parameters. \cite{Yankson} does not explicitly include risk measures but adopts a planning approach that analyzes the negative impacts of power loss and shows how effective planning can mitigate them. Their approach implicitly addresses risk by introducing an energy management framework that ensures energy availability in a microgrid after natural disasters. \cite{Bakhtiari} proposes a modified Metropolis-coupled Markov chain Monte Carlo simulation for planning a renewable energy-based stand-alone microgrid. They demonstrate that the modified model effectively reduces the risk of power shortages during system operation. \cite{Mortimer} investigates risk-aware linear policy approximations in energy-limited and stochastic energy systems by incorporating CVaR. It relies on rolling forecasts in a sequential decision-making framework. Instead of using an MDP, it employs a parameter-modified cost function approximation to find an equivalent scenario that performs comparably to the stochastic methods. \cite{Lara} proposes a use-case for probabilistic forecasts by incorporating them into hour-ahead operations to enhance situational awareness through a risk-averse multi-stage stochastic program. They employ a Markovian representation of probabilistic forecasts that can capture the variability of Renewable Energy. This enables formulating a multi-stage problem while avoiding the scenario generation phase.
Some research on risk-averse sequential decision-making in energy systems has explored risk-averse Model Predictive Control (MPC) methods using CVaR, as presented in studies such as \cite{Ning} and \cite{Hans}. While these studies were inspiring, they do not directly propose or solve the specific formulation we present.

 Many real-world planning problems can be categorized into two types: ``investment'' and ``operational.'' Each decision made during the investment phase can lead to a series of potential actions in the subsequent operational phase. The operational phase generally represents a long-term horizon and is often divided into multiple stages. 
We prefer representing the operational phase through a discrete MDP, as system operators need an estimate of the cost of the long-run operating plan, which will be valuable in the investment phase.

Stochastic processes are becoming more and more prevalent in modeling complex, uncertainty-driven systems in different applications (see \cite{Farhang2025PipelineFaulting,Sobhani2025ExtendedBlockDiagram}). In particular, much research is on creating a risk-aware MDP that extends beyond the energy domain. One of the earliest works in this area, \cite{Howard}, considers the maximization of certain equivalent reward generated by an MDP with constant risk sensitivity. It utilizes value iteration for time-varying finite-horizon optimization and develops a policy iteration procedure to find the stationary policy with the highest certain equivalent gain for the infinite duration case. In more recent work, \cite{Ahmadi} formulates constrained risk-averse MDPs using a Lagrangian framework, employing coherent risk measures such as Conditional Value-at-Risk (CVaR) and Entropic Value-at-Risk to model uncertainty in decision making.
\cite{Chow} proposes algorithms for CVaR optimization in MDPs to achieve locally risk-sensitive optimal policies.
\cite{Bäuerle} explores spectral risk measures in MDPs, proposing an approach that minimizes total discounted cost using a decomposition approach. Their work extends traditional risk-sensitive MDP models by incorporating spectral risk measures as a generalization of CVaR.

Besides the formulation and the solution method, another distinguishing feature of our work is the construction of the MDP states via quantile Fourier regressions. Quantile regressions are ideal for modeling periodically varying random phenomena (\cite{Pritchard_hydro})—such as electricity demand and renewable generation, which exhibit strong daily and seasonal patterns. We allow \textit{time-inhomogeneous} Markov transition probabilities, allowing both the marginal behavior and serial dependence structure to vary periodically. In the Background Section~\ref{sec:background}, we review some of the key aspects of our MDP formulation. For a more detailed explanation, see \cite{QFR}. Building on this, Section~\ref{sec:riskaverseMDP} introduces our risk-averse decision-making framework utilizing Conditional Value-at-Risk (CVaR). It presents a bilinear program formulation along with a new solution method that efficiently produces a globally optimal solution. 
Section~\ref{sec:results} shows numerical results of our model on two applications. We conclude in Section~\ref{sec:LPvariations} by exploring alternative risk-related approaches that keep linear program formulations to improve electricity grid reliability. These methods collectively enhance system resilience while maintaining computational tractability.


\color{black}

\section{Background}
\label{sec:background}

This section provides the foundation of our MDP framework, which is explained in detail in our recent work \cite{QFR}. We begin by describing the quantile Fourier regression technique, which is used to characterize the periodic behavior of an exogenous input. This behavior will define our MDP state space. Then, we outline the structure of the MDP itself, including its characteristics: states, actions, costs, and transition dynamics. Finally, we present two motivating case studies to illustrate the application of our risk-averse approaches later in this paper.


\subsection{Markov decision process for grid reliability}

\begin{table*}[t]
\caption{Model quantities in our two case studies.}
\label{tab:model-quantities}
\centering
\renewcommand{\arraystretch}{1.15}
\setlength{\tabcolsep}{6pt}
\begin{tabular}{@{}l c l c c l c c@{}}
\toprule
& & \multicolumn{3}{c}{Example 1: Thermal backup} & \multicolumn{3}{c}{Example 2: Reservoir management} \\
\cmidrule(lr){3-5} \cmidrule(lr){6-8}
Variable & Symbol
& Description & Set & Unit
& Description & Set & Unit \\
\midrule
Temporal state
& $t \in T$
& period & $1{:}24$ & hour
& period & $1{:}54$ & week \\

Input regime state
& $r \in R$
& net demand state & $1{:}4$ & index
& inflow state & $1{:}4$ & index \\

Operational state
& $\ell \in L$
& generation level & $0{:}13$ & units
& storage level & $0{:}5000$ & MW \\
\midrule
Action / control
& $a \in A$
& ramp step & $-1{:}1$ & units
& reservoir outflow & $0{:}9$ & MW \\

Exogenous input
& $w \in W$
& net demand & $(-\infty, +\infty)$ & MW
& reservoir inflow & $0{:}47$ & MW \\
\midrule
Frequency
& $\{\tau_1, \tau_2, \ldots\}$
& diurnal \& annual & $\{1, 365\}\times 24$ & hour
& annual & $\{1\}$ & week \\
\bottomrule
\end{tabular}
\end{table*}

MDPs provide a systematic framework for choosing policies that optimize a desired objective. MDPs may have different types, including finite or infinite states, finite or infinite time horizons, and continuous or discrete processes. In this paper, we always assume a finite-state discrete-time process with an infinite (and cyclic) time horizon.


The renewable energy system itself is then modeled as a discrete Markov decision process \citep{Bertsekas_DP_Vol1, Puterman, White} comprising a collection of observable \emph{state} variables and manipulable \emph{action} variables, augmented with one-step \emph{state transition probabilities} and state-action \emph{costs}, additive over any state-action sequence. Now we define the components of our MDP:

\subsubsection{MDP variables}

\begin{itemize}
    \item An exogenous random variable \(w(t) \in W\) (``electricity net demand/water'') that quantifies the fluctuating element of a renewable energy system.
    \item Observable \emph{state} components:
          \begin{enumerate}
              \item A \emph{temporal} state \(t \in T \subset \{1, 2, \ldots , |T| \}\) that varies cyclically over its range.
              \item An environmental \emph{regime} state \(r(t) \in R \subset \{1, 2, \ldots, |R|\}\) indexing elements of a partition \(\cup_{r \in R} W_r(t) = W(t)\) of the quantiles of the exogenous signal \(w\).
              \item An operational state \(\ell(t) \in L\) (``level'') of the generator itself.
          \end{enumerate}
    It will be convenient to pack the state variables as \((t, s) \coloneqq(t, (\ell, r)) \in T \times (L \times R) \eqqcolon T \times S\).
    \item A manipulable \emph{action} (control) variable \(a \in A\).
    \item The \textit{decision time points} are the same as time $t \in T$ where $t=1,2,\ldots,|T|$.
    
    \item There is an \textit{immediate cost} $c_{tsa}$ associated with taking action $a$ at the state $s$ at the corresponding time $t$.
    
    \item The $p_{s^\prime \mid t s a}$ are the \textit{transition probabilities} of going from state $s$ to state $s\prime$ with action $a$ at time $t$.
\end{itemize}


\subsubsection{A model of random inputs}

We assume \(w\) is a cyclostationary process, as is natural for wind/solar potential or hydrologic inflow.
A fixed set of quantile curves \(\{w_1(t), \ldots, w_{|R|-1}(t)\}\) of \(w(t)\) is assumed periodic and well approximated from historical time series via Fourier basis regression:
\begin{align*}
    w_r(t \mid B_\alpha) \coloneqq {} & \phi_W(t)\cdot B_{r}
\end{align*}
Here, \(\phi_W\) is a basis of Fourier polynomials, whose order and frequencies reflect the assumed periodicity of the system. The regression coefficients \(B_r\) are determined for a sequence of preselected levels via quantile regression \citep{QFR, Koenker}, each of which entails the solution of a linear program.

The inter-quantile intervals \(W_r(t) \coloneqq [w_{r-1}(t), w_r(t)]\) with \(r \in \{0, \ldots, |R|\}\) define the input regimes with \(w_0(t) \coloneqq 0\) and \(w_{|R|} \coloneqq \infty\).

The conditional distribution 
\[
    p_{w\mid rt} = \prob(w \mid r, t) = \prob(w \in W_r \mid t)
\]
of the exogenous input $w$ [MW], can be approximated from the time series data via histogram regression.





The regime series \(r(t)\) is modeled as a Markov chain with time-dependent, periodic state-transition probabilities:
\begin{align*}
    p_{r^\prime\mid rt}(\vec{\gamma}) \coloneqq {} & \phi_R(t)\cdot \gamma \approx \prob(r^\prime \mid r, t),
\end{align*}
where \(\phi_R\) is another application-specific Fourier basis.
The regression parameters \(\vec{\gamma}\)
are determined via maximization of the associated log-likelihood function, subject to (convex quadratic) non-negativity constraints and (linear) normalization constraints:
\begin{align*}
    \arg\max_{\vec{\gamma}} ~ \sum_{(t,r,r^\prime) \in \text{data}} \log p_{r^\prime \mid rt} (\vec{\gamma}) {} &
    \\
    \sum_{r^\prime} p_{r^\prime\mid rt}(\vec{\gamma}) = {}                                                     & 1 \quad \forall t, r
    \\
    p_{r^\prime\mid rt}(\vec{\gamma}) \in {}                                                                   & [0, 1] \quad \forall t, r, r^\prime
\end{align*}
This problem reduces to a second-order cone-constrained convex program \citep{Lobo-et-al:1998:SOCP}.

\subsubsection{State transition model}


The state variables evolve via a deterministic model \(\psi\), driven by the stochastic input \(w\) and control action \(a\):
\begin{align*}
    t^\prime = {}    & t \bmod |T| + 1
    \\
    r^\prime \sim {} & \prob_R(\bullet \mid t, r)
    \\
    \ell^\prime = {} & \psi(\ell, a, w)
    \\
    w^\prime \sim {} & \prob_W(\bullet \mid t, r)
\end{align*}

We describe examples of \(\phi_W, \phi_R, \psi\) in Sections~\ref{sec:thermal-backup-model}--\ref{sec:reservoir-model} for two concrete case studies. Here $t \bmod |T|$ denotes the remainder after dividing $t$ by $|T|$, so that the time index cycles through $\{1,\dots,|T|\}$ via $t' = (t \bmod |T|) + 1$.

\subsection{Minimum expected cost operating policy}

\color{black}

We now introduce the linear program formulation MDPLP~\eqref{eq:MDPLP} for finding the optimal decision. To avoid repetition, we define the feasible set of state-action probabilities as:
\begin{align}
    X \coloneqq {} &
    \left\{ x \in \Delta \;\middle|\;
    \begin{aligned}
        \sum_{a} x_{t^\prime sa} = \sum_{s^\prime, a} x_{ts^\prime a} p_{s^\prime \mid t s a}  \quad \forall t,  s
    \end{aligned}
    \right\}
    \label{eq:feasible_set}
\end{align}
Here,
\begin{align}
    \Delta \coloneqq {} &
    \left\{ x \geq 0 \;\middle|\;
    \begin{aligned}
        \sum_{s, a} x_{tsa} = 1 \quad \forall t
    \end{aligned}
    \right\}
    \nonumber 
\end{align}
is the set of \(T\)-indexed probability distributions over \(S \times A\); \(X\) is the subset that is compatible with the one-step state-transition probabilities on \(S\).


Each state-action distribution \(x \in X\) corresponds to a feasible cyclostationary operating policy
\begin{align*}
    \prob(s, a \mid t) = x_{tsa}, 
\end{align*}
and each \(x\) confers a distribution on the set of stage costs \(\{c_{tsa}: t \in T, s \in S, a \in A\}\).


\begin{align}
    \text{MDPLP} \quad
    \min_{x}~\left\{\sum_{t, s, a} c_{tsa} \frac{x_{tsa}}{|T|}: x \in X \right\}
    \label{eq:MDPLP}
\end{align}

Here $x_{tsa}$, the only decision variables are the stationary unconditional probabilities that the system is in state $s$ and action $a$ is taken at time $t$.

In \citep[\S19.3]{HillierLieberman}, the authors mention that for any optimal vertex solution of MDPLP~\eqref{eq:MDPLP} at each state $s \in S$ at any given time $t$, only \textit{one} action $a$ can have strictly positive value. Consequently, the MDPLP~\eqref{eq:MDPLP} will deliver a deterministic policy.

\begin{proposition}
    \label{prop:deterministic}
    There exists an optimal policy found by solving the LP formulation \eqref{eq:MDPLP} which is \textit{deterministic} rather than randomized. \citep{HillierLieberman}
\end{proposition}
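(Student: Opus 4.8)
The plan is to reduce the statement to a structural fact about the vertices of the feasible polytope $X$ and then to establish that fact by a basic-feasible-solution counting argument. The objective of MDPLP~\eqref{eq:MDPLP} is linear in $x$, and the feasible set $X$ is a nonempty polytope: it is the intersection of the compact set $\Delta$ (a Cartesian product of probability simplices, one per $t$) with the affine subspace cut out by the balance equations in~\eqref{eq:feasible_set}. Since a linear function on a nonempty compact polytope attains its minimum at an extreme point, there is an optimal $x^\star$ that is a vertex of $X$. It therefore suffices to show that every vertex of $X$ is deterministic, i.e.\ that for each time $t$ and state $s$ at most one action carries positive mass $x^\star_{tsa} > 0$.

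First I would recall that a vertex of $X$, written in standard form $\{x \ge 0 : Ax = b\}$, is a basic feasible solution, so the columns of $A$ indexed by the strictly positive components of $x^\star$ are linearly independent; hence the number of positive components of $x^\star$ is at most $\operatorname{rank}(A)$, where the rows of $A$ are the $|T||S|$ balance constraints together with the $|T|$ per-$t$ normalizations. The heart of the argument is the bookkeeping that pins down this rank. Summing the balance constraints over $s$ at a fixed successor time shows that the total mass is preserved from one time slice to the next (because the transition probabilities $p_{s'\mid tsa}$ sum to one over $s'$); going around the cycle this yields one linear dependency among the balance rows and collapses all $|T|$ normalizations to a single independent inhomogeneous constraint. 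The net effect is $\operatorname{rank}(A) = |T||S|$, exactly the number of state-time pairs.

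Next I would translate this global count into the per-state conclusion. Let $\mu_{ts} \coloneqq \sum_a x^\star_{tsa}$ denote the visiting frequency of $(t,s)$. Every state-time with $\mu_{ts} > 0$ contributes at least one positive component, and a short argument shows that a vertex is supported on a single closed communicating class: visited states never transition into unvisited ones (else an unvisited state would receive positive inflow), and splitting mass across two closed classes would exhibit $x^\star$ as a strict convex combination of two feasible points. Restricting the rank count to this class then forces the number of positive components to equal the number of visited state-times, so each visited $(t,s)$ uses exactly one action; on the unvisited (transient) state-times, where $x^\star$ prescribes nothing, I would simply extend the policy by an arbitrary single action. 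This yields a deterministic optimal policy.

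I expect the main obstacle to be the rank/dependency bookkeeping in the second step together with the accompanying claim that vertices live on a single recurrent class: the global inequality ``number of positive variables $\le \operatorname{rank}(A)$'' does not by itself forbid two actions at one state while another state is left unvisited, so the argument must be localized to the support. An alternative route that sidesteps the explicit rank computation is the dual/complementary-slackness argument: the dual of MDPLP is the average-cost Bellman system, and complementary slackness forces every action in the support of $x^\star$ to attain the Bellman minimum at its state; selecting one such action per state yields a stationary deterministic policy all of whose choices satisfy the optimality equation, which a standard verification theorem certifies as average-cost optimal. I would present the extreme-point argument as the primary proof and note the duality argument as a cross-check.
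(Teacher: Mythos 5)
Your proposal is correct and follows essentially the same route as the paper, which does not spell out a proof but simply invokes the standard fact from \citet{HillierLieberman} (\S 19.3) that at an optimal vertex of MDPLP~\eqref{eq:MDPLP} at most one action per state--time pair carries positive mass; your extreme-point/basic-feasible-solution argument, with the rank bookkeeping and the localization to the support, is precisely the argument underlying that citation. The only point deserving care is the one you already flag: the global bound on the number of positive components must be restricted to the visited (recurrent) state--time pairs before it yields one action per state, with the policy extended arbitrarily on unvisited states.
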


The MDPLP~\eqref{eq:MDPLP} formulation is explained in detail in \citep{HillierLieberman} and \citep{White}, and it identifies the optimal decision policy for a Markov decision process based on minimizing total ``average cost''. The decision policy involves a series of actions undertaken in each state. The primary goal of this model is to minimize the expected costs in the steady state, thereby ensuring that the chosen policy yields optimal average costs over time. 
As a planning tool, the transition probabilities in the formulation described herein rely entirely on historical data---it does not exploit rolling forecasts (cf. \cite{Mortimer} and \cite{Ghadimi})---so cannot be expected to provide the best online operating plan. However, the procedure is ideal for estimating operating costs for use in long-term infrastructure/capacity planning. One recent example of this model can be found in our recent work on optimizing cooling policy of data centers in \cite{Khojaste2025Cooling}.
First, we will use this as our base model in the context of grid reliability as a Markov decision process. In the following sections, we will introduce different methods to include risk measures in this base model.

\subsection{Model set-up: thermal backup of offshore wind}\label{sec:thermal-backup-model}

\begin{table}[ht]
    \centering
    \begin{tabular}{llc}
        Name                   & Description                & Value        \\ \hline
        \texttt{base\_ramp}    & base generation capacity   & 6\,GW        \\
        \texttt{penalty\_cost} & cost of demand curtailment & \$3.0/MWh    \\
        \texttt{fuel\_cost}    & cost of thermal generation & \$0.1/MWh    \\
        \texttt{ramp\_rate}    & unit generation capacity   & 1.5\,MW/unit \\
    \end{tabular}
    \caption{Parameters of the thermal backup case study.}
\end{table}

In a grid with an intermittent renewable source, renewable generation (with zero marginal cost) is dispatched first, while backup thermal resources are dispatched to meet the residual demand. We refer to this residual demand as ``net demand.'' Because the renewable source is intermittent, the system will face a random level of net demand. If we distinguish $|R|$ different levels of net demand, this random variation can be represented by a discrete process moving between those levels. By analyzing the historical data on electricity demand and considering a renewable energy generation based on the corresponding area, we achieve the parameters to fit a Markov chain to the random variable of net demand.

This study extends \cite{QFR} by incorporating a range of risk preferences. As in \citep{QFR}, wind represents the intermittent renewable source, and thermal generation serves as a backup for our model. We used hourly electricity demand data from 2006 to 2020 from FERC \citep{FERC}. The wind power data was derived from hourly wind speed data from NOAA's Buoy 44025 \citep{station44025} combined with the power generation characteristics of the IEA 15\,MW offshore wind turbine \citep{IEA15}. The periodic nature of the net demand was captured using quantile Fourier regression to effectively model time-dependent behaviors. Figure \ref{fig:net_demand_quantiles_15_yearsnew} shows quantiles 0.75, 0.5, and 0.25 fitted the net demand data from 2006 to 2020. Using the three fitted quantiles, we were able to model the serial dependence structure of the net demand level as a four-state Markov chain, assuming one as the lowest net demand level and four as the highest.

\begin{figure} [ht!]
    \centering
    \begin{minipage}{.5\textwidth}
        \centering
        \includegraphics[width=1\textwidth]{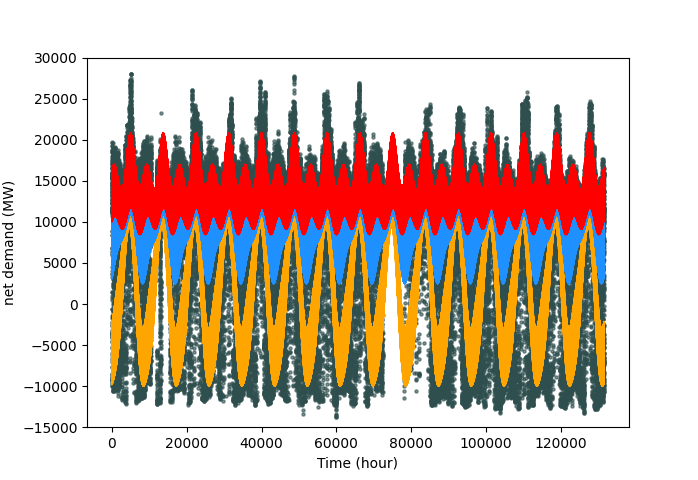}
    \end{minipage}%
    \begin{minipage}{.5\textwidth}
        \centering
        \includegraphics[width=1\textwidth]{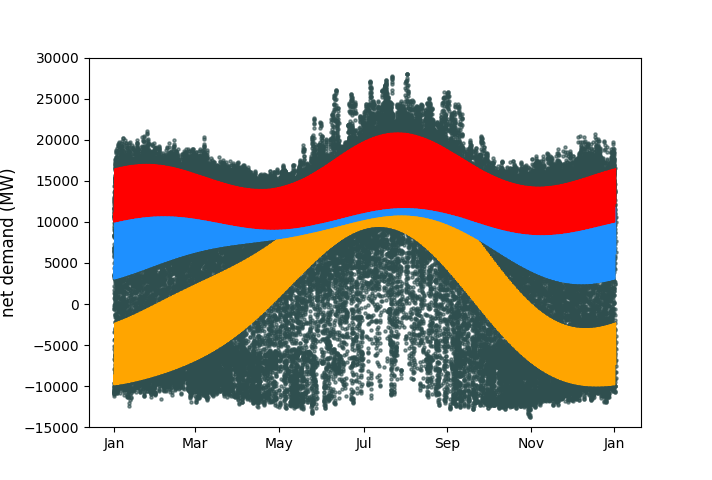}
    \end{minipage}
    \caption{Quantile Fourier regression with both annual and daily periodicity, fitted to 15 years of net demand data with two regressors.
        Left panel: normal plot. Right panel: phase-folded plot.}
    \label{fig:net_demand_quantiles_15_yearsnew}
\end{figure}

To make a time-dependent plan for our second model, we introduced a \textit{maximum likelihood} model where we allow all sixteen transition probabilities to vary annually as simple sinusoids. A detailed explanation can be found at \cite{QFR}.

In our examples, we assume having $13$ Combined-Cycle Gas Turbine (CCGT) power plants with a total of 21 \,GW generation capacity as the backup system. The allowable actions for the model at each state are ``ramp-up,'' ``stay at the same operating level,'' and ``ramp-down.'' The increase or decrease in generation by actions ramping up or down is assumed to be 1.5 \,GW in total, while we have 6 \,GW as base generation.

\subsubsection{Variables and simplifications}

The system variables are summarized in Table \ref{tab:model-quantities}.

\subsubsection{State transition model}

\begin{align*}
    \ell^\prime = {} & \operatorname{clamp}(\ell + a, \min(L), \max(L))
\end{align*}

\subsubsection{Stage costs}

\begin{align*}
    \text{generation} \coloneqq {} & (\texttt{base\_ramp} + \ell ) \times \texttt{ramp\_rate}
    \\
    \text{Curtailed Demand = $CD_{t(lr)}$} \coloneqq {}  & \max(w_{tr} - \text{generation}, 0)
    \\
    c_{t(\ell r)a}
    \coloneqq {}                   &
    \text{generation} \times \texttt{fuel\_cost}                                             \\
    {}                             & + \text{$CD_{t(lr)}$} \times \texttt{penalty\_cost}
\end{align*}

\subsection{Model set-up: hydropower reservoir management}\label{sec:reservoir-model}

In this section, we apply our methods to another example. We adopt the setting of the hydropower reservoir management problem explained in detail in the \cite{QFR}. This setting models the operation of a hydropower system subject to uncertain seasonal inflows and limited storage capacity. The objective is to satisfy constant electricity demand using both hydro and non-hydro generation sources while minimizing an objective under inflow uncertainty. For a detailed description of how this reservoir management setting can be used to construct baseline hydropower generation offer curves, we refer the reader to \cite{Pearce2025}.


System demand is fixed and the marginal cost of generation from the variable renewable resource is negligible, but net demand is now determined by the operating policy to balance opportunity cost. Since each planning period is now one week (rather than one hour), ramping constraints on the thermal generators are now neglected.

\begin{table}[ht]
    \centering
    \begin{tabular}{llc}
        Name                   & Description             & Value                      \\ \hline
        \texttt{min\_outflow}  & run-of-river generation & 500\,MW                    \\
        \texttt{capacity}      & storage capacity        & 500\,MW \\
        \texttt{load}          & fixed load              & 1400\,MW                   \\
        \texttt{fuel\_cost}    & thermal fuel price      & \$50/MWh                   \\
        \texttt{penalty\_cost} & load curtailment price  & \$1000/MWh
    \end{tabular}
    \caption{Parameter values of the reservoir management case study.}
    \label{tab:parame}
\end{table}

\subsubsection{Variables and simplifications}

The system variables are summarized in Table \ref{tab:model-quantities}.

For simplicity, we formulate the model for a single reservoir; however, the model is easily generalizable to multiple reservoirs, and the results obtained in this study will (appropriately) extend to that case. The electrical load [MW] and cost of thermal generation [\$/MWh] are assumed constant when deriving the operating policy. When evaluating the policy through simulation, both may vary.


\subsubsection{State transition model}

\begin{align*}
    \text{released} \coloneqq {}  & \min(\ell + w_{tr}, \texttt{min\_outflow} + a)
    \\
    \ell^\prime = {}              & \operatorname{clamp}(\ell + w_{tr} - \text{released}, \min(L), \max(L))
    \\
    \prob_S(s^\prime \mid s, a) = {} & \prob_W(w \mid r^\prime, t) \, \prob_R(r^\prime \mid r, t)
    \quad 
\end{align*}

\subsubsection{Stage costs}

While water discharge, itself, has zero marginal cost, \(a\) determines the net demand that must be met with thermal generation (nonzero marginal cost) and demand curtailment (punitive cost).

\begin{align*}
    \text{released} \coloneqq {}          & \min(\ell + w_{tr}, \texttt{min\_outflow} + a)
    \\
    \text{hydro\_dispatch} \coloneqq {}   & \max(\text{released}, \texttt{min\_outflow})
    \\
    \text{thermal\_dispatch} \coloneqq {} & \texttt{load} - \text{hydro\_dispatch}
    \\
    \text{lost\_load} \coloneqq {}        & \max(\texttt{min\_outflow} - \text{released}, 0)
    \\
    c_{t(\ell r)a} = {}                   & \text{thermal\_dispatch} \times \texttt{fuel\_cost} \\
    {}                                    & + \text{lost\_load} \times \texttt{penalty\_cost}
\end{align*}

So far, we have had the optimal operating plan of the system coming from an MDP, which is based on the average costs, as we are minimizing the expected costs of the system based on problem \eqref{eq:MDPLP}. The following sections will explain how we can incorporate risk aversion into our decision-making process.

\section{Risk-Averse Markov Decision Process}
\label{sec:riskaverseMDP}
In some applications, achieving the optimal operating plan based only on the average cost is not sufficient, as the decision-maker is not necessarily risk-neutral. Therefore, it is necessary to consider the decision-maker's risk aversion. This involves taking into account the tail behavior of the cost function distribution. One way to implement this is to incorporate a coherent risk measure in the original formulation.
\cite{Artzner} characterized a set of four natural properties desirable for a risk measure, called coherent risk measures, and which has obtained widespread acceptance in various fields, including finance and operations research. Conditional Value-at-Risk (CVaR) is one of the coherent risk measures that has received significant attention in decision-making problems, such as Markov decision processes (MDPs)\citep{Chow}. We use CVaR as a risk measure to hedge against the worst outcomes of the cost function. This can lead to having a \textit{mean-risk} model that ensures robust decision-making under uncertainty.

\subsection{A review of Conditional Value-at-Risk}

The \(\beta\)-quantile (Value-at-Risk) of a random variable \(C\) representing the state-action cost is:
\begin{align*}
    \eta \coloneqq \var(C, \beta) \coloneqq \min \{\alpha: \beta \leq \prob_C(C \leq \alpha)\}
\end{align*}
The corresponding \emph{Conditional Value-at-Risk} at probability level \(\beta\) is defined as the mean of the \(\beta\)-tail distribution:
\begin{align*}
    f \coloneqq \cvar(C, \beta) \coloneqq
    \expect_C\left(\frac{\indicator_{\eta \leq C}}{1 - \beta}\, C\right)
\end{align*}

If \(C\) has a fixed distribution \(\prob(C = c_i) = p_i\), \cite{Rockafellar2000} explain that \(f\) can be computed via a linear program:
\begin{align}
    f
    = {} & \min_{\eta, ~ u \geq 0} ~ \eta + \frac{1}{1 - \beta} \sum_{i} p_i ( c_i - \eta)^+
    \label{eqn:cvar-convex-decoupled}
    \\
    = {} & \min_{\eta, ~ u \geq 0} ~ \left\{\eta + \frac{1}{1 - \beta} \sum_{i} p_i u_i: u_i \geq c_i - \eta \quad \forall i\right\}
    \label{eqn:cvar-linear-coupled}
\end{align}
when \(p\) is known, these equivalent programs are convex: \eqref{eqn:cvar-linear-coupled} is linear, while the feasible set in \eqref{eqn:cvar-convex-decoupled} is a simple product.

\subsection{Interpretation of CVaR}
\label{Interpretation_of_CVaR}
Before turning to the case where \(p\) is a decision variable, consider two reformulations for interpretation. Both are well established, but perhaps not very widely known.

Introducing two non-negative slack variables \(v \geq 0\) and \(u \geq 0\) in the constraint of \eqref{eqn:cvar-linear-coupled}
      \begin{align*}
          c_i - \eta \eqqcolon {} & u_i - v_i,
      \end{align*}
      and using the identity
      \begin{align*}
          \eta \equiv {} & 1\eta \equiv \left(\sum_i p_i\right) \eta \equiv \sum_i p_i \eta,
      \end{align*}
      the objective function of \eqref{eqn:cvar-linear-coupled} can be rewritten:
      \begin{align*}
          1 \eta + \frac{1}{1 - \beta} \sum_{i} p_i u_i
          \equiv {} & \sum_i p_i \left(\eta + \frac{1}{1 - \beta} u_i\right)
          \\
          \equiv {} & \sum_i p_i \left(c_i - u_i + v_i + \frac{1}{1 - \beta} u_i\right)
          \\
          \equiv {} & \sum_i
          \overbrace{\left(c_i + \frac{\beta u_i + (1 - \beta) v_i}{1 - \beta}\right)}^{\bar{c}_i \coloneqq} ~ p_i
      \end{align*}
      Hence, \eqref{eqn:cvar-linear-coupled} has an equivalent form:
      \begin{align}
          f = \min_{\eta, u \geq 0, v \geq 0} ~ \left\{
          \sum_i \left(c_i + \frac{\beta u_i + (1 - \beta) v_i}{1 - \beta}\right) p_i:
          u_i - v_i = c_i - \eta \quad \forall i\right\}
          \label{eqn:cvar-over_under}
      \end{align}
      The modified objective is another expectation: The original costs \(c_i\) exceeding quantile \(\eta\), are penalized in proportion with \(\beta\).



\color{black}

\subsection{The Bilinear Program Formulation}

In our context, the probabilities \(p\) are decision variables to be determined, so \eqref{eqn:cvar-linear-coupled} and equivalently \eqref{eqn:cvar-over_under} become nonconvex bilinear programs. Using \eqref{eqn:cvar-over_under}, we define the risk-averse version of the MDPLP~\eqref{eq:MDPLP} as:

\begin{equation}
\begin{array}{lll}
\text{MDPBL}  \displaystyle \min_{x,\eta} & \frac{1}{|T|} \sum_{t \in T} \sum_{s \in S} \sum_{a \in A} c_{tsa} x_{tsa} + \lambda x_{tsa} \left( \beta (c_{tsa}-\eta)^{+} + (1-\beta) (\eta - c_{tsa})^{+} \right) &  \\

            \text{s.t.} & x \in X\\
                        &  \eta \text{  is free.}
\end{array}
\label{eq:MDPBL}
\end{equation}
where \( x_{tsa} \) and \( \eta \) are decision variables, and \( \lambda \) and \( \beta \) are constants, and \( (a)^{+} \) is the \textbf{positive-part function} of $a$ as \( (a)^{+} = \max\{a,0\} \).

Here, $\lambda$ is a coefficient that determines the weight of the risk component in this mean-risk formulation, which is $\lambda = \frac{1}{1 - \beta}$. \citep{Rockafellar2000}

We can see that this optimization problem is no longer a linear program problem; instead, it takes the form of a bilinear problem. This bilinear problem is challenging to solve even for a small-scale problem because as it is non-convex. Generally, two different but complementary approaches in non-convex programming exist: global and local. Global approaches, such as spatial branch-and-bound, can guarantee the global optimality of the solutions but are time-consuming, particularly for large-scale problems. On the other hand, local approaches are much faster but can only reach local solutions that are not necessarily global optima \citep{LePham}.

This bilinear problem takes much time to solve using the ``Gurobi 12'' solver, which employs the spatial branch-and-bound method. Even for a small problem with only $24$ time steps ($|T|=24$) which leads to the same plan for every day, it takes hours to prove optimality, as the optimality gap is still more than 17 percent after one day of run-time. As the problem size increases, the gap between the best bound and the current objective function grows, and the run-time rises significantly. This makes the problem prohibitively large for our problem, which has a unique operational plan for every day of the year ($|T|=8760$). In the next section, we briefly discuss local approaches for this problem; however, our primary focus is an exact method that solves it globally.

\subsection{Difference-of-Convex program to Reach a Local Optimum}
One way to solve the non-convex problems more efficiently is to reformulate them as Difference-of-Convex (DC) programs. Doing so allows us to use various algorithms commonly used in the DC programs. Before doing that, for simplification, we use two auxiliary variables, $U_{tsa}$ and $V_{tsa}$, to represent $(c_{tsa}-\eta)^{+}$ and $(\eta - c_{tsa})^{+}$ respectively. This can be ensured by adding the constraint $ \eta + U_{tsa} - V_{tsa} - c_{tsa} = 0$.

\begin{equation}
\begin{array}{lll}
\text{MDPBL } \displaystyle \min_{x,\eta,U,V} & \frac{1}{|T|} \sum_{t \in T} \sum_{s \in S} \sum_{a \in A} c_{tsa} x_{tsa} + \lambda x_{tsa} (\beta U_{tsa} + (1-\beta) V_{tsa}) &  \\
                                \text{s.t.} & x \in X\\
                                & \eta + U_{tsa} - V_{tsa} - c_{tsa} = 0 & \forall t, \forall s, \forall a \\
                       & U_{tsa} \geq 0  & \forall t, \forall s, \forall a\\
                       & V_{tsa} \geq 0  & \forall t, \forall s, \forall a\\
                       &  \eta \text{  free.}
\end{array}
\label{eq:MDPBLUW}
\end{equation}

There are different ways to make this problem a DC program.  One way to do this is using the simple fact that $xy=\frac{1}{4}[(x+y)^2-(x-y)^2]$. Accordingly, the objective function of \ref{eq:MDPBL} can be reformulated as:

\begin{equation}
\frac{1}{|T|} \sum_{t \in T} \sum_{s \in S} \sum_{a \in A} c_{tsa} x_{tsa} + \frac{\lambda}{4} (\beta [(x_{tsa}+U_{tsa})^2-(x_{tsa}-U_{tsa})^2] + (1-\beta) [(x_{tsa}+W_{tsa})^2-(x_{tsa}-W_{tsa})^2] )
\label{eq:DCA}
\end{equation}



After expanding \eqref{eq:DCA}, we can have a difference of two convex functions, as we have $f$ and $g$ below, where both are convex functions. The objective function is then expressed as $f-g$ subject to the constraints at \eqref{eq:MDPBL}:

\begin{equation}
f= \frac{1}{|T|} \sum_{t \in T} \sum_{s \in S} \sum_{a \in A} c_{tsa} x_{tsa} + \frac{\lambda}{4} \beta (x_{tsa}+U_{tsa})^2+\frac{\lambda}{4}(1-\beta) (x_{tsa}+W_{tsa})^2 \\ 
\end{equation}
\begin{equation}
g= \frac{1}{|T|} \sum_{t \in T} \sum_{s \in S} \sum_{a \in A} \frac{\lambda}{4} \beta (x_{tsa}-U_{tsa})^2 + \frac{\lambda}{4}(1-\beta) (x_{tsa}-W_{tsa})^2
\end{equation}


\begin{equation}
\begin{array}{lll}
\text{MDPDC min } & f-g \\
                                \text{s.t.} & \text{constraints of} \eqref{eq:MDPBLUW}
\end{array}
\label{eq:MDPDC}
\end{equation}

We can apply various algorithms to solve this DC problem at this stage. One method we can use is the Difference Convex Algorithm (DCA) introduced by Pham Dinh and Le Thi in their early work \citep{PhamDinh}. This algorithm is a local search method that does not guarantee optimality. Other local solvers, such as CONOPT, can also solve medium-scale problems. Since we present a method to solve the problem to global optimality in the next section, we will not examine results from local solutions; instead, we leave this exploration to the reader.


\subsection{An Exact Search Method to Reach the Global Optimum}

Due to its non-convex nature, solving the bilinear program \eqref{eq:MDPBL} using global methods such as spatial branch-and-bound becomes computationally prohibitive. To address this computational challenge, we propose a structured search approach to find global solutions efficiently.

To solve this problem, we identified $\eta$ as a \textit{complicating variable}. If we fix $\eta$, the original non-convex problem \eqref{eq:MDPBL} (denoted as the master problem) becomes a linear program \eqref{eq:subMDPBL} (subproblem), which can be solved efficiently. 

\begin{equation}
\begin{array}{lll}
\text{subMDPBL} \displaystyle \min_{x} & \frac{1}{|T|} \sum_{t \in T} \sum_{s \in S} \sum_{a \in A} c_{tsa} x_{tsa} + \lambda x_{tsa} \left( \beta (c_{tsa}-\eta)^{+} + (1-\beta) (\eta - c_{tsa})^{+} \right) &  \\
            \text{s.t.} & x \in X\\
\end{array}
\label{eq:subMDPBL}
\end{equation}

It is worth mentioning that the difference between the subproblem \eqref{eq:subMDPBL} and the master problem \eqref{eq:MDPBL} is that in the subproblem $\eta$ is a parameter rather than a decision variable. Similar to the MDPLP case \ref{eq:MDPLP}, the LP formulation for the subMDPBL \ref{eq:subMDPBL} also yields a solution with deterministic policies as mentioned in the following Proposition \ref{prop:deterministic2}.

\begin{proposition}
\label{prop:deterministic2}
There exists an optimal policy found by solving the LP formulation for the subMDPBL that is \textit{deterministic} rather than randomized.
\end{proposition}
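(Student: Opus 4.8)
The plan is to show that fixing $\eta$ collapses the apparently bilinear objective of \eqref{eq:subMDPBL} into a linear one, so that subMDPBL is nothing more than an instance of MDPLP \eqref{eq:MDPLP} with a modified cost vector, whereupon Proposition~\ref{prop:deterministic} applies essentially verbatim.

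First I would observe that because $\eta$ enters \eqref{eq:subMDPBL} as a fixed parameter rather than a decision variable, the quantities $(c_{tsa}-\eta)^{+}$ and $(\eta - c_{tsa})^{+}$ are constants for every triple $(t,s,a)$. Collecting these constants into a single effective cost coefficient
\begin{align*}
    \bar{c}_{tsa} \coloneqq c_{tsa} + \lambda\left(\beta\,(c_{tsa}-\eta)^{+} + (1-\beta)\,(\eta - c_{tsa})^{+}\right),
\end{align*}
the objective of subMDPBL becomes $\frac{1}{|T|}\sum_{t,s,a}\bar{c}_{tsa}\,x_{tsa}$, which is linear in the only remaining decision variable $x$.

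Next I would note that the feasible region of \eqref{eq:subMDPBL} is exactly the polytope $X$ from \eqref{eq:feasible_set}, the same constraint set used by MDPLP \eqref{eq:MDPLP}. Hence subMDPBL is literally an instance of MDPLP in which the original stage cost $c_{tsa}$ has been replaced by $\bar{c}_{tsa}$. Since $X$ is a bounded polytope and the objective is linear, an optimum is attained at a vertex of $X$; and Proposition~\ref{prop:deterministic}, via the vertex argument of \citep{HillierLieberman}, guarantees that at any such optimal vertex, at each $(t,s)$ only one action carries strictly positive mass. The conditional policy $\prob(a\mid t,s)$ extracted from that vertex is therefore deterministic, and the conclusion transfers immediately to subMDPBL.

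There is essentially no hard step here; the entire content is the recognition that treating $\eta$ as a constant linearizes the objective, so that no genuinely bilinear reasoning is needed. The only point requiring a moment's care is confirming that the cost modification is well defined and finite for every $(t,s,a)$---which it is, since both $c_{tsa}$ and $\eta$ are finite---so that the extreme-point structure underpinning Proposition~\ref{prop:deterministic} carries over without change.
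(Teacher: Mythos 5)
Your proposal is correct and matches the paper's intended argument: the paper offers no explicit proof but justifies Proposition~\ref{prop:deterministic2} by noting that, with $\eta$ fixed, subMDPBL~\eqref{eq:subMDPBL} is ``similar to the MDPLP case,'' which is precisely your reduction to an instance of MDPLP~\eqref{eq:MDPLP} over the same polytope $X$ with the modified costs $\bar{c}_{tsa}$, after which Proposition~\ref{prop:deterministic} applies. You have simply made explicit the step the paper leaves implicit.
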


By solving this subproblem for a fixed $\eta$, we obtain $f^{*}(\eta)= min f(x|\eta)$. Ideally, we would like to assert that \( f^{*}(\eta) \) is convex (or at least quasi-convex) in \( \eta \), so that, straightforward to solve, but that is not always the case. The convexity of \( f^{*}(\eta) \) in \( \eta \) depends on how we define \( c_{tsa} \).
In our model, because $c_{tsa}$ contains nonlinear terms (see Section \ref{sec:background}), therefore, $f^{*}(\eta)$ can inherit non‑convexity. This means that $f^{*}(\eta)$ may exhibit non-convex behavior with respect to $\eta$ (see Figures \ref{fig:nonconvexity}).

\begin{figure}[H]
\centering
\begin{minipage}{.5\textwidth}
\centering
  \includegraphics[width=1\textwidth]{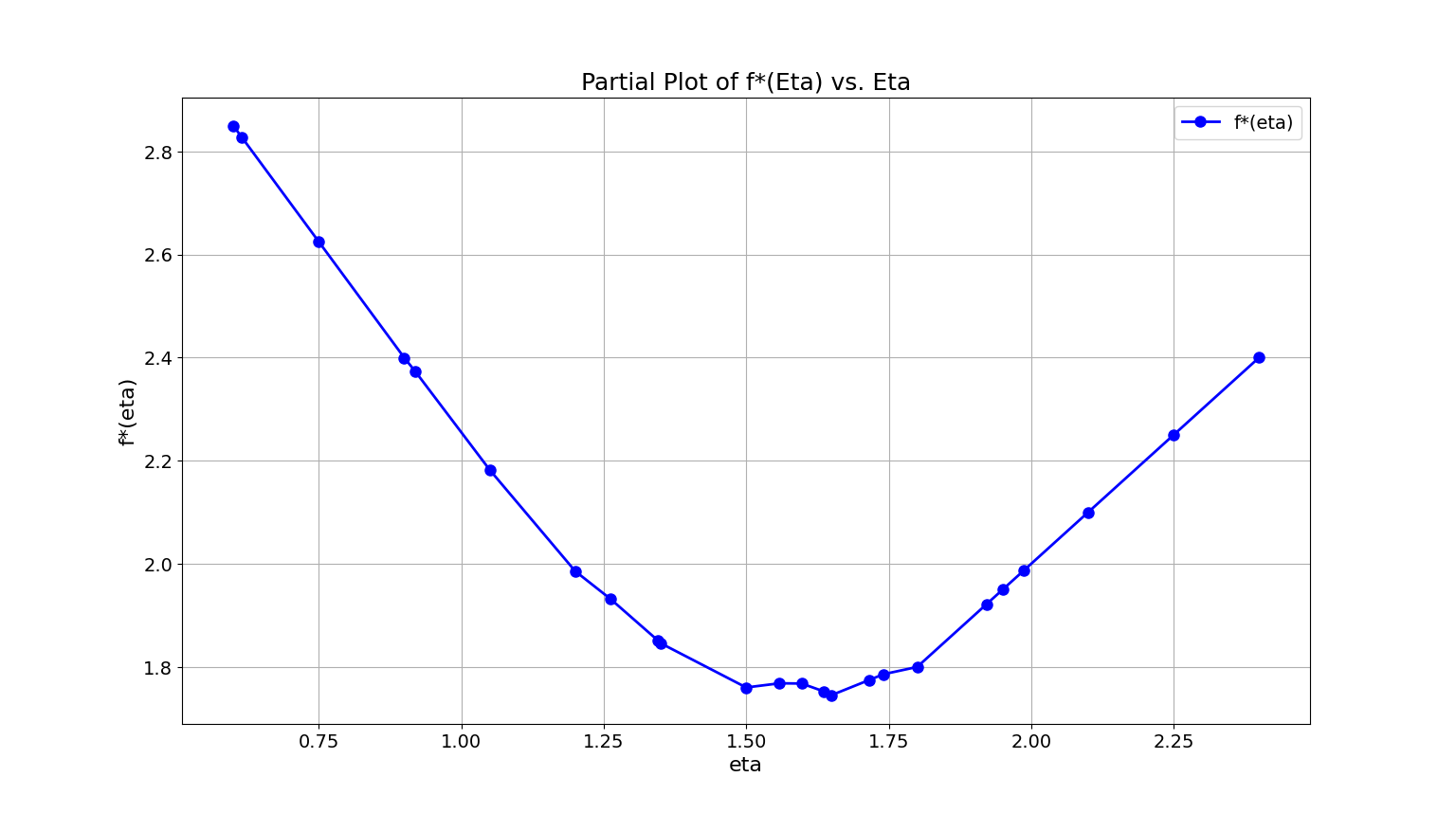}
\end{minipage}%
\begin{minipage}{.5\textwidth}
\centering
  \includegraphics[width=1\textwidth]{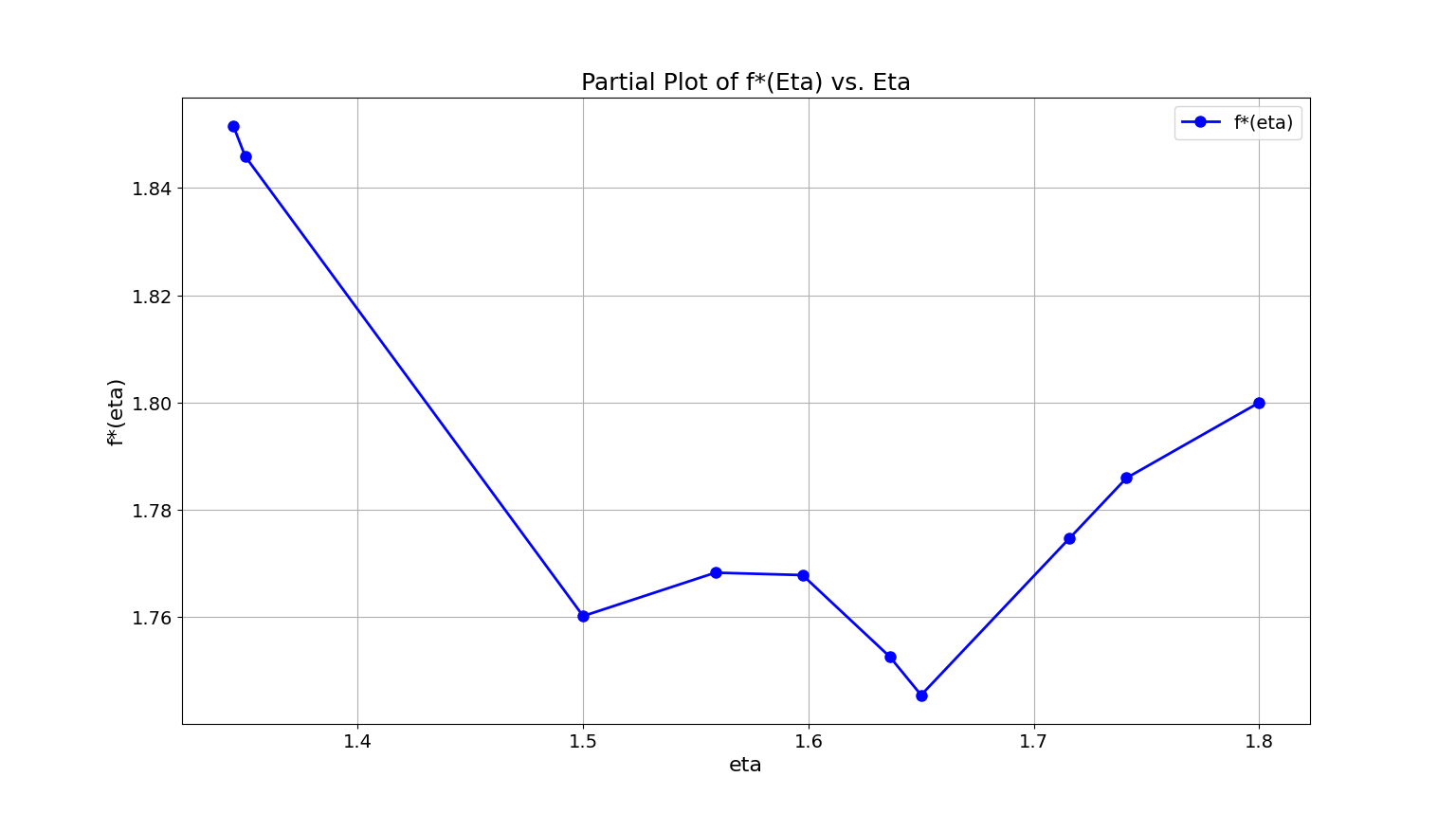}
\end{minipage}

\caption{Non-convexity that can happen in some cases for \( f^{*}(\eta) \). Left panel: A zoomed-out plot. Right panel: A zoomed-in plot.}
\label{fig:nonconvexity}
\end{figure}
It is important to recognize that the LP formulation of the MDP looks at a distribution. This distribution is finite and discrete, with \( c_{tsa} \) as its atoms, and the LP seeks the optimal \( \eta \), which is a quantile of the \( c_{tsa} \) distribution. The LP sets the probabilities \( x_{tsa} \), but in the end, the \( x_{tsa} \)s are simply the weights, even though they comply with an extra condition, which is the Markov constraint (the third constraint in \eqref{eq:MDPBL}). This interpretation motivates Proposition \ref{prop:eta=c} below.

\begin{proposition}
There exists an optimal solution $(x^*,\eta^*)$ such that $\eta^* = c_{tsa}$ for some $t,s,a$.
\label{prop:eta=c}
\end{proposition}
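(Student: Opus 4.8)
The plan is to reduce the claim to a one-dimensional fact about the variable $\eta$. Write the MDPBL objective as $g(x,\eta)$ and note that the joint problem factorizes as $\min_{x\in X}\min_{\eta} g(x,\eta)$, so it suffices to understand the inner minimization over $\eta$ for a single well-chosen $x$. Let $(x^*,\eta^{\mathrm{opt}})$ be any optimal solution and fix $x=x^*$. Studying $\Phi(\eta):=g(x^*,\eta)$, I would first observe that the term $\frac{1}{|T|}\sum_{t,s,a} c_{tsa}x^*_{tsa}$ is constant in $\eta$, while the remaining risk term is $\lambda$ times a nonnegative weighted sum of the check (pinball) functions $\eta\mapsto \beta(c_{tsa}-\eta)^{+}+(1-\beta)(\eta-c_{tsa})^{+}$. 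Each of these is convex and piecewise linear with its single kink at $\eta=c_{tsa}$; hence $\Phi$ is convex and piecewise linear, and every kink of $\Phi$ lies in the finite set $\{c_{tsa}:t\in T,\,s\in S,\,a\in A\}$.

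Next I would establish coercivity so that the minimum is attained at a finite point. Since $x^*\in\Delta$ gives $\sum_{t,s,a}x^*_{tsa}=|T|>0$, the asymptotic slope of $\Phi$ is $\lambda(1-\beta)>0$ as $\eta\to+\infty$ and $-\lambda\beta<0$ as $\eta\to-\infty$ (using $0<\beta<1$), so $\Phi(\eta)\to+\infty$ at both ends. A coercive convex piecewise-linear function on $\real$ attains its minimum on a nonempty closed interval whose endpoints are kinks, because the slope can change sign from nonpositive to nonnegative only at a kink. If the minimizer is unique it is itself a kink; if the minimizing set is a flat segment, each endpoint is a kink. Either way I can select a minimizer $\eta^*\in\{c_{tsa}\}$, say $\eta^*=c_{t's'a'}$.

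To finish, I would note that $\Phi(\eta^*)=\min_\eta g(x^*,\eta)=g(x^*,\eta^{\mathrm{opt}})$ equals the optimal value of the joint program, so $(x^*,\eta^*)$ is optimal and satisfies $\eta^*=c_{t's'a'}$ as required. This also matches the interpretation already emphasized in the text: the optimal $\eta$ is precisely the $\beta$-VaR of the discrete cost distribution whose atoms are the $c_{tsa}$, and a quantile of a discrete distribution is always realized at one of its atoms, which is exactly $\var(C,\beta)$ from the CVaR review.

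The main obstacle I anticipate is the degenerate, flat-minimum case: confirming that when the set of $\eta$-minimizers is a genuine interval rather than a single point, its endpoints are true kinks of $\Phi$ and hence equal cost atoms $c_{tsa}$. This follows from convexity together with coercivity, but it is the one place where the argument must be stated carefully; the secondary technical point is ensuring coercivity, where the strict inequalities $0<\beta<1$ (equivalently $\lambda=\tfrac{1}{1-\beta}$ finite and positive) are what guarantee positive slopes at both tails.
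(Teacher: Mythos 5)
Your proposal is correct and follows essentially the same route as the paper's proof: fix $x=x^*$ and exploit the fact that $\eta\mapsto f(x^*,\eta)$ is piecewise linear with kinks only at the cost atoms, with tail slopes $-\lambda\beta$ and $\lambda(1-\beta)$ forcing the minimizer into $[\min_{tsa}c_{tsa},\max_{tsa}c_{tsa}]$ and then onto an atom. The only presentational difference is that you package the interior step via convexity and coercivity of the one-dimensional function, whereas the paper argues directly that the objective is affine between consecutive atoms and pushes $\eta^*$ to an endpoint (handling the zero-slope case by noting both endpoints remain optimal); both handle the flat-minimum degeneracy you flag.
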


\begin{proof}
Let us denote the objective function by
\[
f(x,\eta) = \frac{1}{|T|} \sum_{t \in T} \sum_{s \in S} \sum_{a \in A} c_{tsa} x_{tsa} + \lambda x_{tsa} \left( \beta (c_{tsa}-\eta)^{+} + (1-\beta) (\eta - c_{tsa})^{+} \right).
\]
Then, for all feasible $(x,\eta)$ such that $\eta \geq \max_{tsa} c_{tsa}$, we have:
\[
\begin{aligned}
f(x,\eta) &= \frac{1}{|T|} \sum_{t \in T} \sum_{s \in S} \sum_{a \in A} c_{tsa} x_{tsa} + \lambda x_{tsa} (1-\beta) (\eta - c_{tsa}) \\
&= \lambda (1-\beta) \eta \cdot \frac{1}{|T|} \sum_{t \in T} \sum_{s \in S} \sum_{a \in A} x_{tsa} + \frac{1}{|T|} \sum_{t \in T} \sum_{s \in S} \sum_{a \in A} c_{tsa} x_{tsa} (1-\lambda (1-\beta)) \\
&= \lambda (1-\beta) \eta + \frac{1}{|T|} \sum_{t \in T} \sum_{s \in S} \sum_{a \in A} c_{tsa} x_{tsa} (1-\lambda (1-\beta)).
\end{aligned}
\]

In the last equality, we have used the fact that for all feasible $x$, $\sum_{sa} x_{tsa} = 1 ~\forall t$.

This implies that $f(x,\eta)$ is linear and increasing in $\eta$ for all $\eta \geq \max_{tsa} c_{tsa}$, therefore any optimal solution satisfies $\eta^*\leq \max_{tsa} c_{tsa}$. Likewise, for all feasible $(x,\eta)$ with $\eta \leq \min_{tsa} c_{tsa}$, we have
\[
f(x,\eta) = -\lambda \beta \eta + \frac{1}{|T|} \sum_{t \in T} \sum_{s \in S} \sum_{a \in A} c_{tsa} x_{tsa} (1+\lambda\beta).
\]

This implies that $f(x,\eta)$ is linear and decreasing in $\eta$ for all $\eta \leq \min_{tsa} c_{tsa}$, therefore $\eta^*\geq \min_{tsa} c_{tsa}$.

We conclude that, if $(x^*, \eta^*)$ is an optimal solution, we must have
\[
\min_{tsa} c_{tsa} \leq \eta^* \leq \max_{tsa} c_{tsa}.
\]

Now, let $(x^*, \eta^*)$ be an optimal solution, and suppose that $\eta^* \neq c_{tsa} ~\forall t,s,a$. Let
\[
c_0 = \max_{t,s,a:~ c_{tsa} \leq \eta^*} c_{tsa}, \quad
c_1 = \min_{t,s,a:~ c_{tsa} \geq \eta^*} c_{tsa}.
\]

Note that, since we are supposing that $\eta^* \neq c_{tsa} ~\forall t,s,a$, we have $c_0 < \eta^* < c_1$.

Now, for all $c_0 \leq \eta \leq c_1$, we have
\[
\begin{aligned}
f(x^*, \eta) &= \frac{1}{|T|} \sum_{t, s, a} c_{tsa} x^*_{tsa}
+ \frac{\lambda \beta}{|T|} \sum_{t, s, a: ~ c_{tsa} \geq c_1} x^*_{tsa} (c_{tsa}-\eta) \\
&\quad + \frac{\lambda (1-\beta)}{|T|} \sum_{t, s, a: ~ c_{tsa} \leq c_0} x^*_{tsa} (\eta - c_{tsa}) \\
&= \eta \left[\frac{\lambda (1-\beta)}{|T|} \sum_{t, s, a: ~ c_{tsa} \leq c_0} x^*_{tsa}
- \frac{\lambda \beta}{|T|} \sum_{t, s, a: ~ c_{tsa} \geq c_1} x^*_{tsa} \right] \\
&\quad + \frac{1}{|T|} \sum_{t, s, a} c_{tsa} x^*_{tsa}
+ \frac{\lambda \beta}{|T|} \sum_{t, s, a: ~ c_{tsa} \geq c_1} x^*_{tsa} c_{tsa}
- \frac{\lambda (1-\beta)}{|T|} \sum_{t, s, a: ~ c_{tsa} \leq c_0} x^*_{tsa} c_{tsa}.
\end{aligned}
\]

If
\[
\frac{\lambda (1-\beta)}{|T|} \sum_{t, s, a: ~ c_{tsa} \leq c_0} x^*_{tsa}
- \frac{\lambda \beta}{|T|} \sum_{t, s, a: ~ c_{tsa} \geq c_1} x^*_{tsa} > 0
\]
and $\eta^* > c_0$, then $f(x^*, c_0)< f(x^*,\eta^*)$, which is a contradiction.

Likewise, if this term is negative and $\eta^* < c_1$, then $f(x^*, c_1)< f(x^*,\eta^*)$, which is also a contradiction.

Finally, if this term is zero, then $f(x^*, c_0) = f(x^*, \eta^*) = f(x^*, c_1)$, which implies that $(x^*, c_0)$ and $(x^*, c_1)$ are also optimal solutions.
\end{proof}


Hence, the optimal threshold must coincide with one of the cost atoms \( c_{tsa} \). Since \( \eta \) is one of the \( c_{tsa} \) values, testing them all will reveal the optimal \( \eta \) with minimum \( f^{*}(\eta) \).
Without the Markov constraint, one could have easily achieved all \( x_{tsa} \) by concentrating all of the weights on states with \( c_{tsa} < \eta \). However, the Markov balance constraint ensures that all time points are connected through valid transitions.

This search for the optimal \( \eta \) that minimizes \( f^{*}(\eta) \) naturally takes place over all of the \( c_{tsa} \) values. However, the next lemma can help us significantly narrow down the search space. To explain that, first we introduce an alternative but equivalent formulation of our problem in the following proposition.

\begin{proposition}
The following formulation is equivalent to MDPBL~\eqref{eq:MDPBL}:
\begin{equation}
\begin{array}{lll}
\text{MDPBL} \quad \displaystyle \min_{x,\eta} & \eta + \dfrac{1}{|T|} \lambda \sum_{t \in T} \sum_{s \in S} \sum_{a \in A} x_{tsa} \left( (c_{tsa}-\eta)^{+} \right) \\[1ex]
\text{s.t.} & x \in X\\
& \eta \text{ is free.}
\end{array}
\label{eq:MDPeta}
\end{equation}
\end{proposition}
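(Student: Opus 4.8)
The plan is to prove equivalence by establishing that the two objective functions coincide \emph{pointwise} on the feasible set $X$, not merely that they attain the same optimal value. Since the constraint sets of \eqref{eq:MDPBL} and \eqref{eq:MDPeta} are identical (both require $x \in X$ with $\eta$ free), it suffices to show that for every pair $(x,\eta)$ with $x \in X$ the objective of \eqref{eq:MDPBL} equals the objective of \eqref{eq:MDPeta}; equality of the minimizers and of the optimal values is then immediate.

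First I would abbreviate $u_{tsa} := (c_{tsa}-\eta)^{+}$ and $v_{tsa} := (\eta - c_{tsa})^{+}$ and record the elementary positive-part identity $u_{tsa} - v_{tsa} = c_{tsa} - \eta$, which lets me eliminate $v_{tsa}$ through $v_{tsa} = u_{tsa} - (c_{tsa}-\eta)$. Substituting this into the risk term and using $\lambda = \tfrac{1}{1-\beta}$ (so that $\lambda(1-\beta)=1$), the bracketed risk expression collapses to $\lambda\bigl(\beta u_{tsa} + (1-\beta) v_{tsa}\bigr) = \lambda u_{tsa} - (c_{tsa}-\eta)$. Multiplying by $x_{tsa}$ and adding the mean term $c_{tsa}x_{tsa}$, the $\pm c_{tsa}x_{tsa}$ contributions cancel, leaving the clean per-term value $\lambda\, x_{tsa}\, u_{tsa} + \eta\, x_{tsa}$.

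Summing over $(t,s,a)$ and dividing by $|T|$ then yields $\tfrac{\lambda}{|T|}\sum_{t,s,a} x_{tsa}\, u_{tsa} + \tfrac{\eta}{|T|}\sum_{t,s,a} x_{tsa}$. The concluding step invokes the normalization built into $X$, namely $\sum_{s,a} x_{tsa} = 1$ for each $t$ (inherited from the definition of $\Delta$), so that $\sum_{t,s,a} x_{tsa} = |T|$ and the second term collapses to exactly $\eta$. Restoring $u_{tsa} = (c_{tsa}-\eta)^{+}$ reproduces the objective of \eqref{eq:MDPeta}, establishing the pointwise identity.

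The algebra is routine; the only place the argument genuinely uses the problem structure rather than pure manipulation is the final step, where the reduction of the $\eta$-weighted sum to a bare $\eta$ relies on the probability-normalization constraint holding for \emph{every} $t \in T$. I would therefore emphasize explicitly that the identity is valid on $X$ (equivalently on $\Delta$) and not for arbitrary $x \ge 0$, and note that it is precisely the substitution used to pass between \eqref{eqn:cvar-linear-coupled} and \eqref{eqn:cvar-over_under} in Section~\ref{Interpretation_of_CVaR}, now carried out in reverse.
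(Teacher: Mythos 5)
Your proof is correct and takes essentially the same route as the paper: the paper's proof simply points back to the algebraic identity of Subsection~\ref{Interpretation_of_CVaR} relating \eqref{eqn:cvar-linear-coupled} and \eqref{eqn:cvar-over_under} via the slack decomposition $c_i-\eta=u_i-v_i$ and the normalization $\sum_i p_i=1$, and you carry out exactly that substitution in reverse, correctly using $\lambda(1-\beta)=1$ and $\sum_{s,a}x_{tsa}=1$ for every $t$ so the $\eta$-weighted mass collapses to $\eta$. Your emphasis that the identity holds pointwise on $X$ (and only because the normalization holds for each $t$) is a worthwhile clarification but does not change the substance of the argument.
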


\begin{proof}
A proof is provided in Subsection~\ref{Interpretation_of_CVaR}.
\end{proof}

Considering the optimization problem defined in \eqref{eq:MDPeta}, we now explain the lemma that further prunes the candidate set. Lemma \ref{lemma:threshold_eta} identifies the smallest threshold $\eta^{*}$ after which the objective becomes linear.

\begin{lemma}
\label{lemma:threshold_eta}
There exists a finite threshold \(\eta^*\), defined by the Markov structure, beyond which the optimal objective simplifies exactly to \(f^*(\eta) = \eta\). 
Specifically, we define:
\[
\eta^* = \min\{\eta : \exists x_{tsa}\geq 0\text{ feasible under Markov constraints, with } x_{tsa}=0\text{ if } c_{tsa}>\eta\}
\]
Then:
\[
f^*(\eta)=\eta,\quad\forall\eta\geq\eta^*.
\]
\end{lemma}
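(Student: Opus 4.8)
The plan is to prove the two inequalities $f^*(\eta)\geq\eta$ and $f^*(\eta)\leq\eta$ separately, the first holding universally and the second exactly on $[\eta^*,\infty)$. Working from the equivalent formulation~\eqref{eq:MDPeta}, I would first note that the penalty term $\frac{\lambda}{|T|}\sum_{t,s,a} x_{tsa}(c_{tsa}-\eta)^{+}$ is a sum of nonnegative quantities, since $x_{tsa}\geq 0$, $\lambda = \frac{1}{1-\beta}>0$, and $(c_{tsa}-\eta)^{+}\geq 0$. Hence $f(x\mid\eta)\geq\eta$ for every feasible $x$, and minimizing over $x$ gives the lower bound $f^*(\eta)\geq\eta$ for all $\eta$, with no extra hypotheses.

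For the reverse inequality I would exploit the defining property of $\eta^*$. Fix any $\eta\geq\eta^*$. By definition there is a Markov-feasible $x\in X$ with $x_{tsa}=0$ whenever $c_{tsa}>\eta^*$; since $\{(t,s,a):c_{tsa}\leq\eta^*\}\subseteq\{(t,s,a):c_{tsa}\leq\eta\}$, this same $x$ also satisfies $x_{tsa}=0$ whenever $c_{tsa}>\eta$. For this $x$ each summand of the penalty vanishes (either $x_{tsa}=0$ or $(c_{tsa}-\eta)^{+}=0$), so $f(x\mid\eta)=\eta$ and therefore $f^*(\eta)\leq\eta$. Combined with the lower bound, this yields $f^*(\eta)=\eta$ for all $\eta\geq\eta^*$, which is the claim.

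It remains to verify that $\eta^*$ is well defined, finite, and attained as a minimum. Finiteness I would get by taking $\eta=\max_{t,s,a}c_{tsa}$: then the support condition ``$x_{tsa}=0$ if $c_{tsa}>\eta$'' is vacuous, so every $x\in X$ qualifies, and $X$ is nonempty because it is exactly the cyclostationary state–action polytope already used in MDPLP~\eqref{eq:MDPLP}. Thus $\eta^*\leq\max_{t,s,a}c_{tsa}<\infty$. For attainment I would argue by the discrete, monotone structure of the feasibility condition: because the $c_{tsa}$ take finitely many values, the admissible support $\{(t,s,a):c_{tsa}\leq\eta\}$ is piecewise constant in $\eta$ and changes only as $\eta$ crosses a cost atom; and feasibility of a concentrated $x$ is monotone in $\eta$ (enlarging the admissible support preserves feasibility). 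Hence the set of admissible $\eta$ is an up-set of the form $[\eta^*,\infty)$ whose left endpoint $\eta^*$ coincides with one of the cost atoms $c_{tsa}$, so the defining minimum is attained.

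The main obstacle I anticipate is this last paragraph rather than the two inequalities: one must establish that $X$ is nonempty and that the piecewise-constant, monotone dependence of the support condition on $\eta$ forces the defining minimum to exist and to land on a cost atom. Once nonemptiness of $X$ and this discreteness observation are in hand, the support-vanishing trick makes both bounds immediate.
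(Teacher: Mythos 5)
Your proof is correct and follows essentially the same route as the paper's: both arguments rest on the nonnegativity of the penalty term in the formulation~\eqref{eq:MDPeta} (giving $f^*(\eta)\geq\eta$ unconditionally) together with the observation that any Markov-feasible $x$ supported on $\{(t,s,a):c_{tsa}\leq\eta\}$ makes the penalty vanish and hence achieves $f(x\mid\eta)=\eta$. Your write-up is in fact somewhat more careful than the paper's, which leaves the lower bound implicit and does not address finiteness or attainment of the minimum defining $\eta^*$; your observation that the admissible set of $\eta$ is an up-set whose left endpoint is a cost atom is a worthwhile addition, not a deviation.
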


\begin{proof}
For the equality \(f^*(\eta)=\eta\) to hold at optimality, we must necessarily have:
\[
x_{tsa}=0,\quad\forall (t,s,a) \text{ with } c_{tsa}>\eta.
\]

While one might naively set $\eta = \max_{tsa} c_{tsa}$ so that $\{(t,s,a): c_{tsa}>\eta\}=\emptyset$, we observe the Markov constraints impose stronger structural restrictions: Probability mass at each timestep $t+1$ must precisely match probabilities transitioned from timestep $t$. This connectivity condition can make it impossible to restrict probabilities to lower-cost states/actions unless $\eta$ surpasses a threshold that ensures connectivity between these states/actions at all times.
This critical point arises from Markov constraints:
\begin{align*}
\sum_{a\in A} x_{(t+1)sa} = \sum_{k\in S}\sum_{a\in A} x_{tka}p_{tks}(a), \quad \forall t, \forall s.
\end{align*}

We define the candidate set of allowable states/actions for threshold \(\eta\) as:
\[
S_{\eta} = \{(t,s,a): c_{tsa}\leq\eta\}.
\]
We can call this $\eta$-safe set, as it is the set of state–action pairs whose cost does not exceed $\eta$. 
Then we define $\eta^*$:
\[
\eta^* = \min\{\eta : \exists x_{tsa}\geq0\text{ feasible under Markov constraints, with } x_{tsa}=0\text{ if } c_{tsa}>\eta\}.
\]
Clearly, for all \(\eta\geq\eta^*\), the set \(S_{\eta}\) includes the set \(S_{\eta^*}\), ensuring at least one stationary feasible solution exists, thus achieving:
\[
f^*(\eta)=\eta,\quad\forall\eta\geq\eta^*.
\]
Conversely, for \(\eta<\eta^*\), no feasible solution with support only in \(S_{\eta}\) exists (by minimality definition of \(\eta^*\)). Therefore, below \(\eta^*\), the objective \(f^*(\eta)\) must remain strictly larger than \(\eta\). Hence, it \(\eta^*\) precisely marks the threshold where the optimal objective value transitions from nonlinear to linear behavior in \(\eta\), completing the proof.
\end{proof}

This is illustrated in Figure \ref{fig:nonconvexity}(left), as at $\eta=1.8$ onward, $f^*(\eta)$ equals $\eta$.
This implies that the Markov condition restriction is already satisfied beyond this point, and further increasing \( \eta \) will not improve the objective function. 
Consequently, the minimum of \( f^*(\eta) \) must occur at or before the threshold \( f^*(\eta) = \eta \). This allows us to limit our candidate set of unique \( c_{tsa} \) values. Thus, we only need to test $\eta$ values from 
$min$ $c_{tsa}$ up to the first $c_{tsa}$ where \( f^{*}(\eta) = \eta \). This significantly reduces the computational burden. With this result, we introduce Algorithm \ref{al:algorithm1}. Using this algorithm, we can efficiently achieve the global optimum for MDPBL~\eqref{eq:MDPBL}.

\begin{algorithm}
    \caption{Finding the optimal \(\eta\)}
    \begin{algorithmic}[1]
        \State \(f^*,\eta^* \leftarrow \infty, \texttt{NaN}\)
        \ForAll{\(\eta \leftarrow \operatorname{sort}(\operatorname{unique}(c))\)}
        \State \(f \leftarrow f^*(\eta)\) \Comment{Solve \eqref{eq:subMDPBL}}
        \If{\(f < f^*\)}
        \State \(f^*, \eta^* \leftarrow f, \eta\) \Comment{Record new candidate}
        \EndIf
        \If {\(f = \eta\)}
        \State \textbf{break} \Comment{Entering linear growth regime}
        \EndIf
        \EndFor
        \State \Return \(\eta^*\)  \Comment{Global minimizer}
    \end{algorithmic}
    \label{al:algorithm1}
\end{algorithm}

\section{Numerical Results}
\label{sec:results}
\subsection{Thermal backup of offshore wind}

To assess the impact of incorporating CVaR into our base model, we compare the resulting risk-averse policies against the baseline risk-neutral plan for the thermal backup example. Figure \ref{fig:method1flexibleplan} shows the differences between the risk-neutral operational plan and those generated with our model with different levels of risk aversion. 
Increasing $\beta$ means more generation in order to prevent high-cost, low-probability outcomes.
Table \ref{tab:all_variations_concise_updated} presents the total curtailed demand and total system costs from simulations across 17 years for the thermal backup of offshore wind application. It demonstrates how increased risk aversion ($\beta$) reduces demand curtailments at the cost of higher operational costs. As expected, larger values of $\beta$ prevent worst-case outcomes, leading to more conservative actions. In this case, the total curtailed demand decreases while the total system costs increase.

\begin{figure}[H]
\centering
\begin{minipage}{.5\textwidth}
\centering
  \includegraphics[width=1\textwidth]{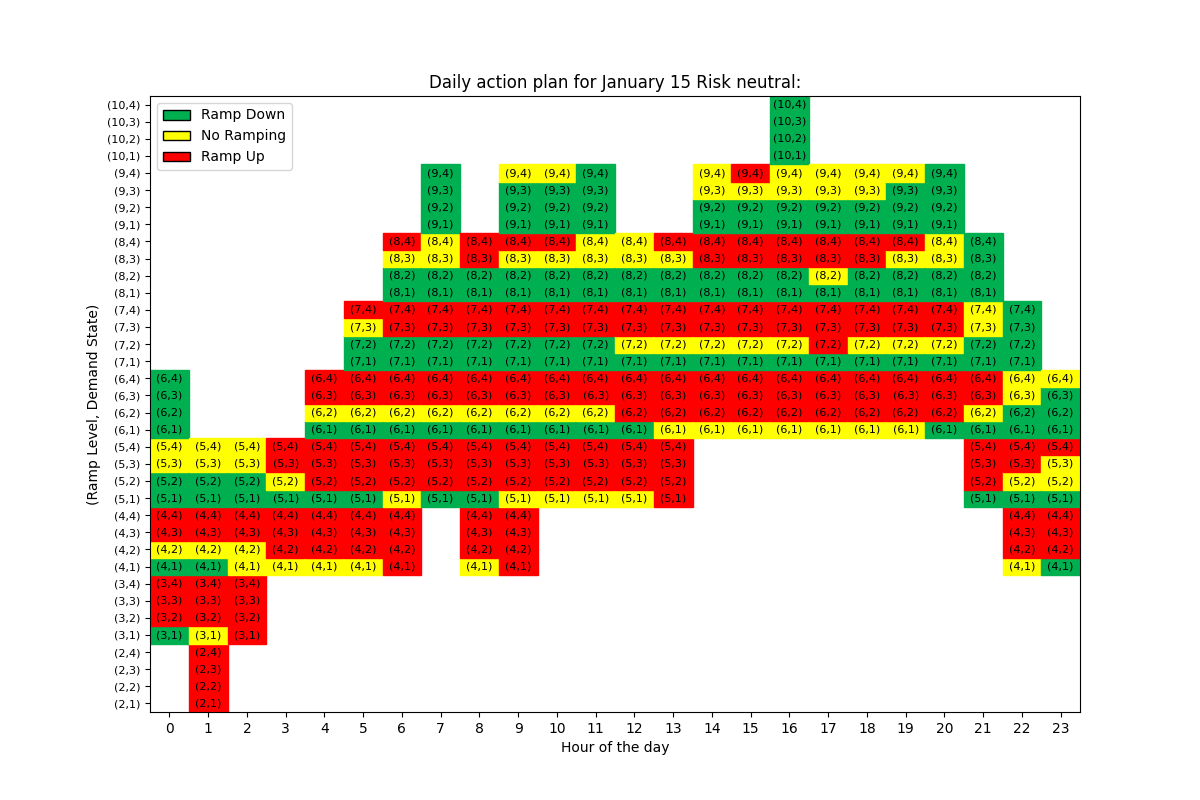}
\end{minipage}%
\begin{minipage}{.5\textwidth}
\centering
  \includegraphics[width=1\textwidth]{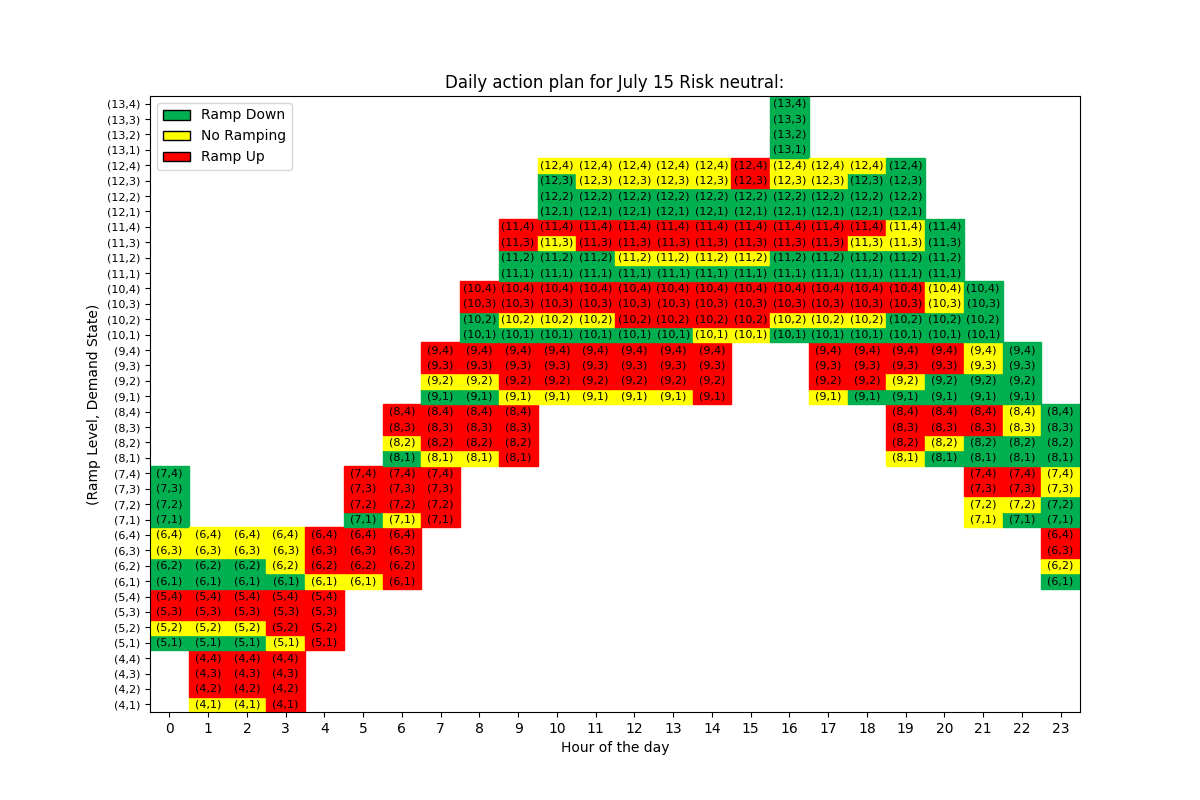}
\end{minipage}


\begin{minipage}{.5\textwidth}
\centering
  \includegraphics[width=1\textwidth]{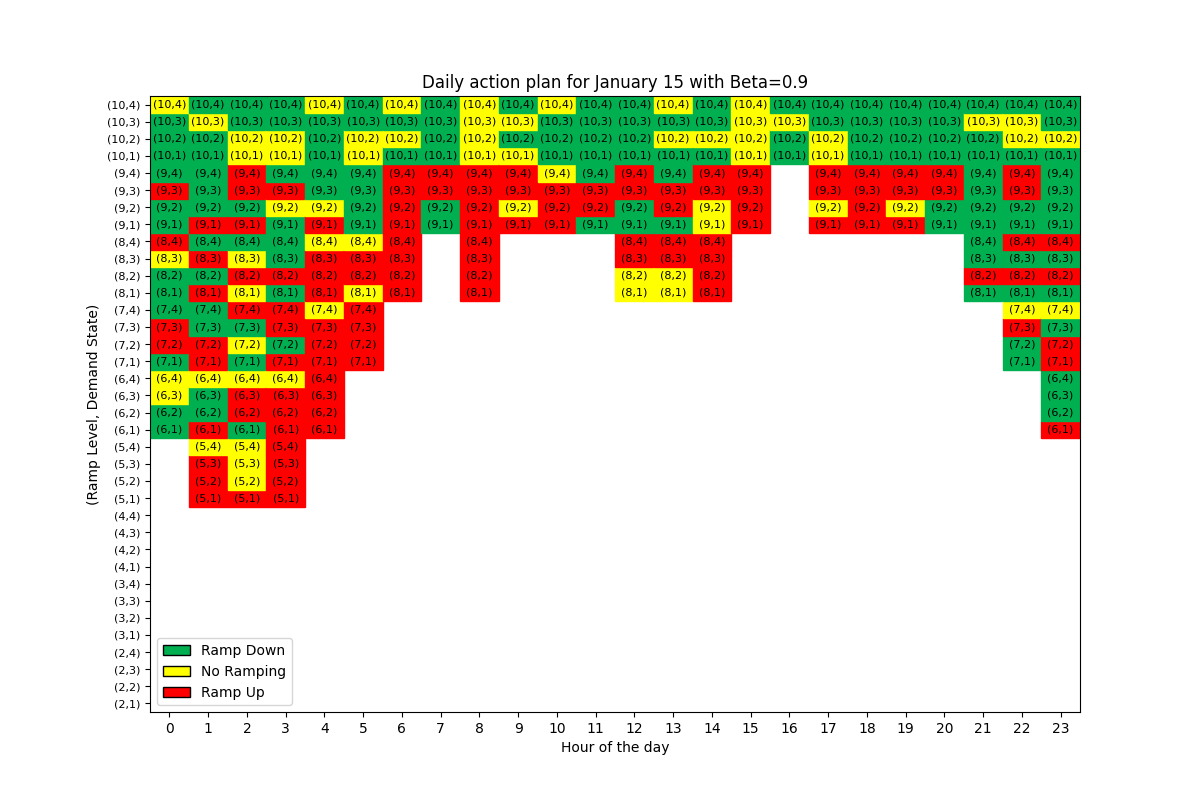}
\end{minipage}%
\begin{minipage}{.5\textwidth}
\centering
  \includegraphics[width=1\textwidth]{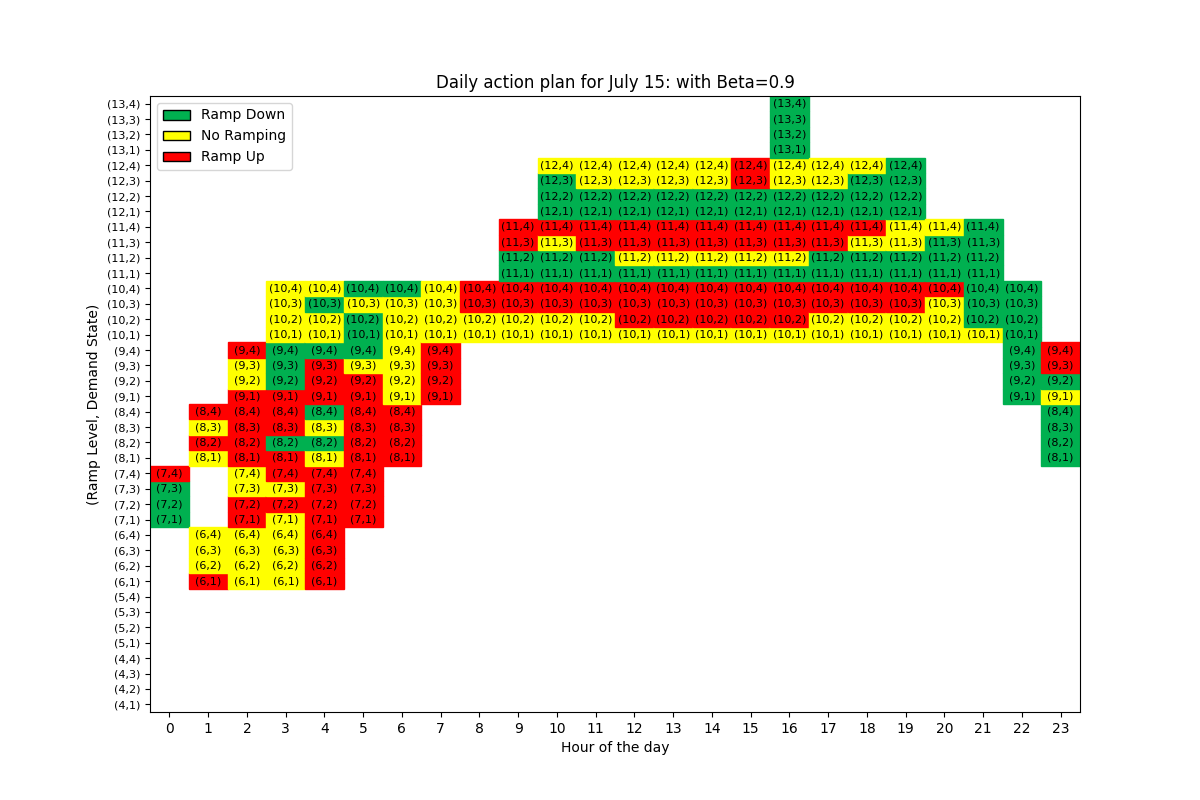}
\end{minipage}

\caption{Thermal generation action plans (hourly ramp decisions) for representative winter and summer days. Columns show Jan. 15 (left) and Jul. 15 (right); rows show the risk-neutral policy (top) and the risk-averse policy with $\beta=0.9$ (bottom). Colors indicate the selected action: ramp down (green), no ramping (yellow), and ramp up (red).}
\label{fig:method1flexibleplan}
\end{figure}

\subsection{Hydropower reservoir management}

Figures \ref{fig:hydro-policy-0.0}--\ref{fig:hydro-policy-0.9} depict the optimal operating policies for the risk-neutral version alongside with \(\beta \in \{0.5, 0.9\}\) in terms of the respective amount of thermal generation in each state---which is the complement of hydropower generation in the assumed fixed load to be served.
Time \(t\) varies on the horizontal axis; regime \(r\) varies along the plotting panes, and reservoir level \(\ell\) along the vertical axis of each pane.

\afterpage{%
  \clearpage
  \thispagestyle{empty}
    \begin{figure}[p]
        \centering
        \begin{subfigure}[b]{0.48\textwidth}
            \includegraphics[width=\textwidth]{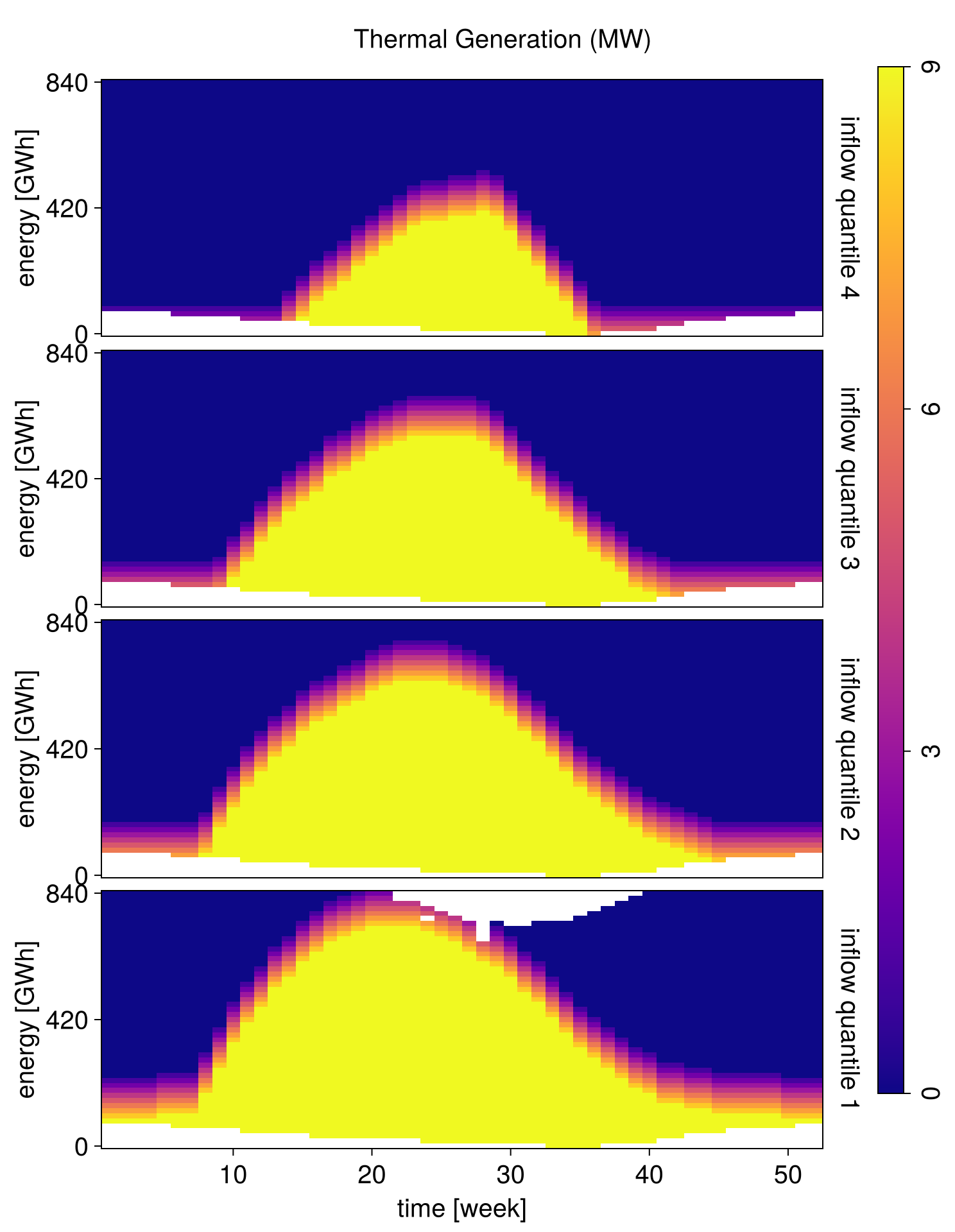}
            \caption{Risk neutral}
            \label{fig:hydro-policy-0.0}
        \end{subfigure}
    
        \centering
        \begin{subfigure}[b]{0.48\textwidth}
            \includegraphics[width=\textwidth]{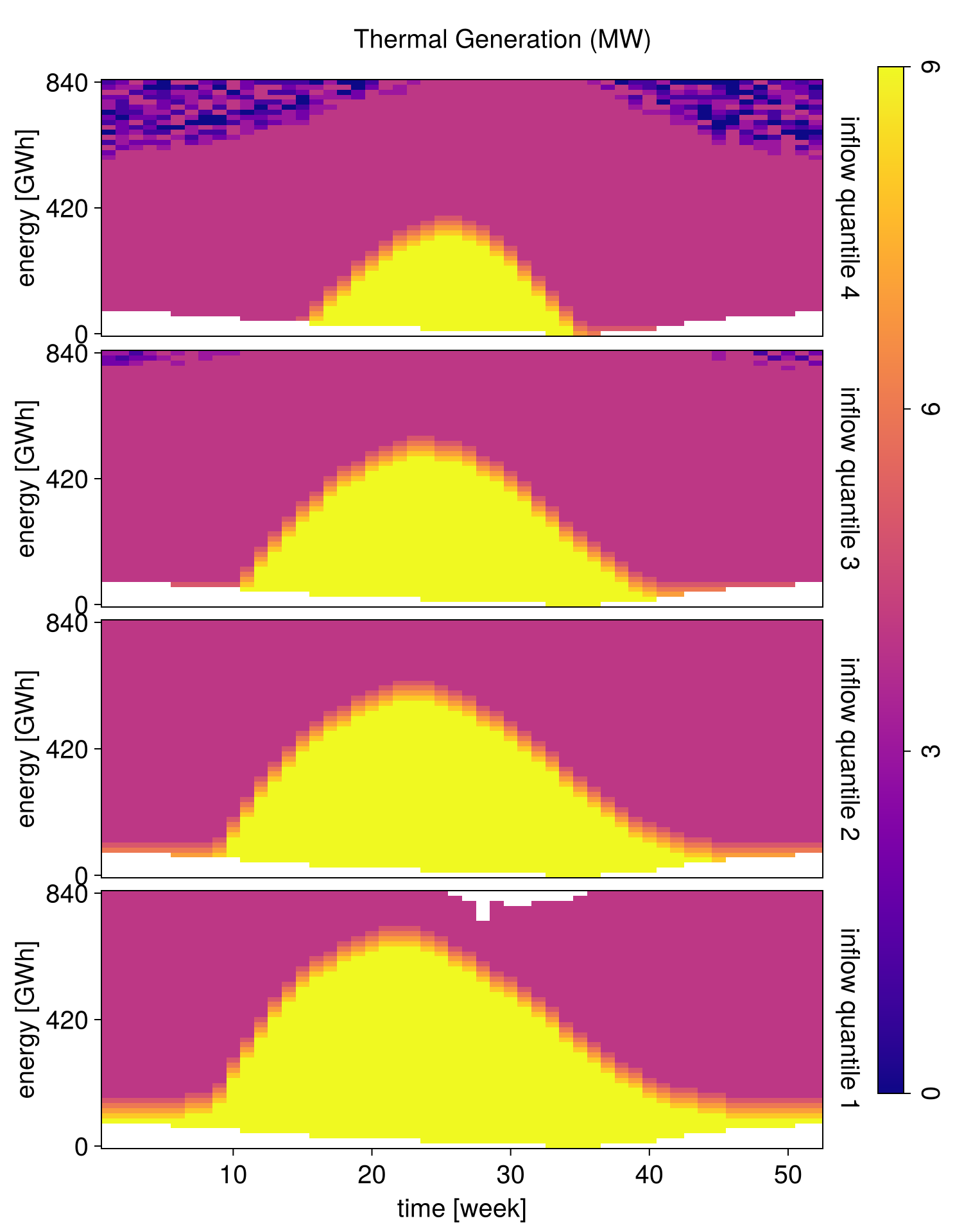}
            \caption{$\beta=0.5$}
            \label{fig:hydro-policy-0.5}
        \end{subfigure}
        \medskip
        \begin{subfigure}[b]{0.48\textwidth}
            \includegraphics[width=\textwidth]{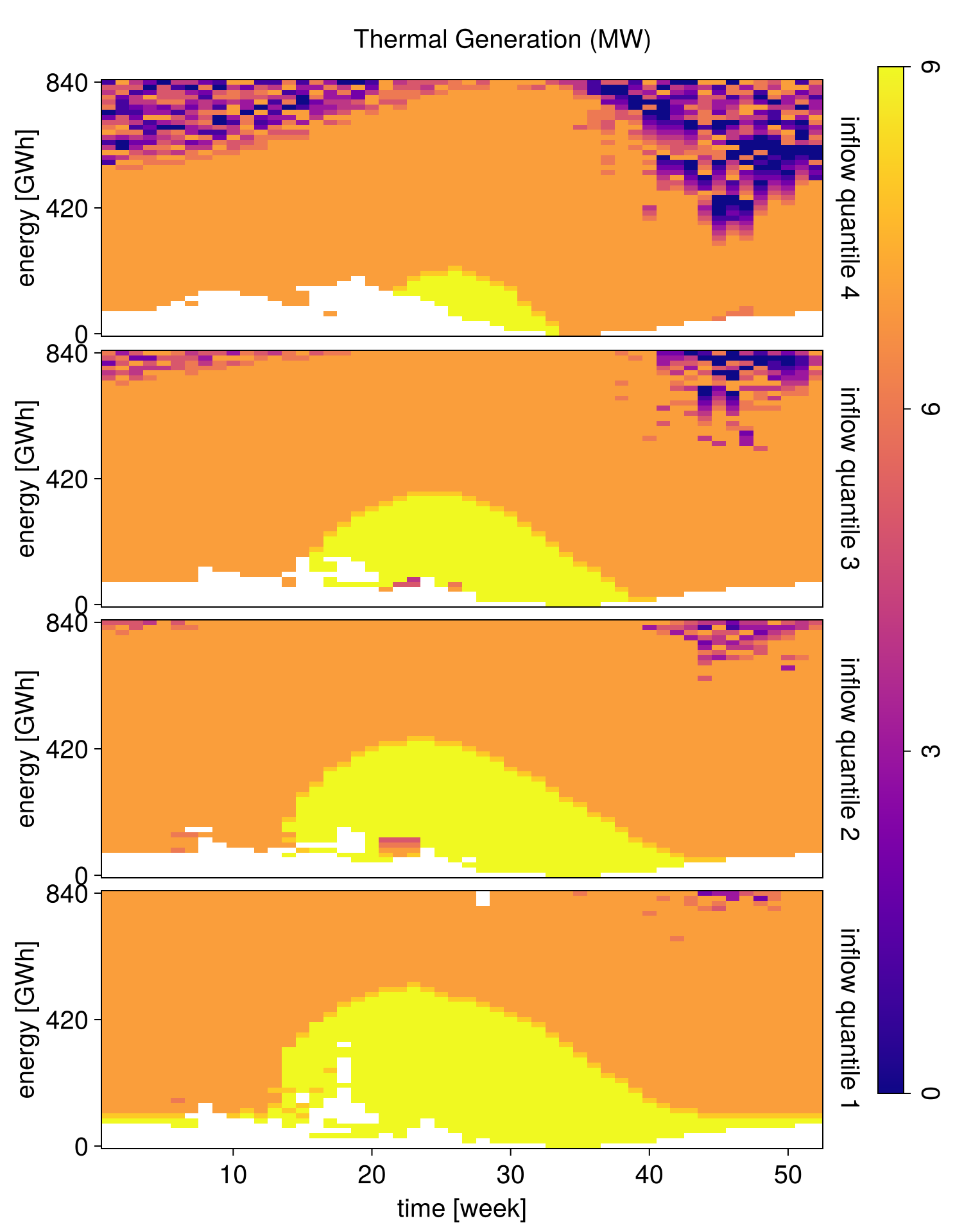}
            \caption{$\beta=0.9$}
            \label{fig:hydro-policy-0.9}
        \end{subfigure}
        \caption{Optimal reservoir operating policies shown as thermal generation (MW) needed to meet the fixed load (i.e., the complement of hydropower output) over time $t$ and storage level $\ell$, for each inflow regime $r\in\{1,\dots,4\}$. Panels: (a) risk-neutral, (b) $\beta=0.5$, (c) $\beta=0.9$.}
    \end{figure}
  \clearpage
}
For the Waitaki Catchment, the largest seasonal inflow has typically been observed in the summer (December to February), with lowest inflow during the winter (June to August) \citep{Pritchard_hydro, Zheng2007}.

The uncolored (white) areas visible at the lower edge of each pane indicate states that are \emph{not} visited when following the respective operating policy.
For each risk level \(\beta\), several trends are evident in the plots:
\begin{itemize}
    \item Decreasing thermal generation with increasing \(\ell\) (wetter regime).

    \item Longer periods of thermal generation at lower lake levels during the (dry) middle months: The yellow region at the base of each pane is centered approximately on the dry (winter) months and decreases in breadth and height with increasing inflow regime \(r\).

    \item Increasing thermal (decreasing hydropower) generation at higher storage levels with increasing risk aversion \(\beta\).

\end{itemize}

Two other features are, perhaps, less intuitive:
\begin{itemize}
    \item The transition between \textit{some} hydropower generation and \textit{none} (following any vertical section) is increasingly sharp with increasing \(\beta\).

    \item Both of the risk-averse policies exhibit apparent ``ambivalence'' at high lake levels in the high-inflow regime \(r = 4\) over the wet months (November to February, say), manifest in the ``patchwork'' pattern wherein a small change in state \((t, \ell)\) can produce a large change in hydro-thermal split. This suggests increasing numerical condition number in the linear programs with respect to increasing \(\beta\).

\end{itemize}

\section{Chance constraints with linear constraints}
\label{sec:LPvariations}
This section explores several approaches that grid operators can employ to enhance grid reliability. We will only focus on the thermal backup of the offshore wind model, and the examples and results are only for this application.
Each of these approaches can be adopted as an alternative to the CVaR-based method in Section \ref{sec:riskaverseMDP}. All these variations share similar underlying concepts, as each introduces additional linear constraints to our primary formulations MDPLP~\eqref{eq:MDPLP} to address specific reliability concerns. These variations maintain a linear program structure and therefore remain straightforward to solve. 
In some of the variations, the state space must be expanded to incorporate additional information, but the overall MDP structure remains unchanged. 
Before presenting these variations in detail, we note that the optimal decisions resulting from adding these to our main problems will still remain deterministic, as highlighted in Proposition \ref{prop:deterministic3}.

\begin{proposition}
    \label{prop:deterministic3}
    When \textsc{MDPLP}~\eqref{eq:MDPLP} is augmented with any of the linear constraints introduced in this section, there exists an optimal policy obtained by solving them that remains deterministic rather than randomized.
\end{proposition}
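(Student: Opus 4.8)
The plan is to reduce every augmented model to an instance of the base formulation \eqref{eq:MDPLP}, so that Proposition \ref{prop:deterministic} can be invoked directly. Concretely, I would first partition the variations of this section into two groups: (i) those that enforce a reliability requirement by \emph{expanding the state}---e.g.\ appending to $s$ a finite memory variable (such as an indicator of curtailment in the previous period) and, where needed, \emph{restricting the admissible actions} at the new states; and (ii) those that leave the state space intact but append linear constraints to the occupation measure $x$. For group (i) the argument is immediate: the expanded model is itself of the form \eqref{eq:MDPLP} over an enlarged finite state space $\tilde S$ with enlarged action sets, the same cyclic balance structure as in \eqref{eq:feasible_set}, and nonnegative costs, so Proposition \ref{prop:deterministic} yields an optimal vertex that selects a single action in each expanded state; projecting back to the original state space gives a deterministic (finite-memory) policy.

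For group (ii) I would argue at the level of the polytope. The augmented problem is still a linear program, hence attains its optimum at a vertex of the augmented feasible set $X \cap \{x : \text{new linear constraints}\}$. The key observation I would establish is that each reliability constraint considered here can be written so that it acts \emph{pointwise}, i.e.\ it is equivalent to forcing $x_{tsa}=0$ on a prescribed set of ``unsafe'' triples $(t,s,a)$ (equivalently, shrinking the admissible action set $A(t,s)$), rather than as a genuine aggregate budget coupling many triples. Under this form the augmented feasible set is simply a face of the original polytope obtained by deleting columns, so it coincides with the feasible set of a \emph{sub-MDP} having the same balance structure. Proposition \ref{prop:deterministic} then applies to this sub-MDP, and I would carry its basis-counting / complementary-slackness argument over verbatim: the number of independent active constraints still equals the number of visited time--state pairs $(t,s)$, and since each visited pair needs at least one positive action, counting forces exactly one, i.e.\ determinism; for unvisited pairs (zero marginal) the action choice is free and can be fixed deterministically.

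The step I expect to be the main obstacle is precisely the boundary of group (ii): verifying that the reliability constraints used in this section really do act pointwise and are not true \emph{aggregate} budget constraints of the form $\sum_{(t,s,a)\in \mathcal B} x_{tsa}\le \varepsilon$. A single coupling constraint of this type can, by the standard theory of constrained Markov decision processes, force randomization in one state---adding $m$ active coupling constraints raises the basis-count bound from $|T|\,|S|$ to $|T|\,|S|+m$ and thereby permits up to $m$ states with two positive actions. I would therefore spend the core of the proof checking, variation by variation, that each added constraint either defines a sub-MDP (as above) or can be absorbed into the expanded state and transition structure of group (i); once that is established for every variation in the section, determinism follows uniformly from Proposition \ref{prop:deterministic}. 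If some variation genuinely introduces a coupling budget, the fallback is to show that in the specific construction the budget is slack at the optimum (so the constraint is inactive and the base result applies) or can be tightened to a pointwise restriction without changing the optimal value.
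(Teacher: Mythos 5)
The paper states Proposition~\ref{prop:deterministic3} without proof, so there is no argument of record to compare yours against; what follows assesses your proposal on its own terms. Your group~(i) reduction is sound: Variation~4 only modifies the stage costs, and Variation~3 with $\mathcal{P}_{year}=0$ (and the ``no curtailment at all'' case) amounts to deleting state--action columns from an enlarged MDP with states $(\ell,r,z_t)$, so the resulting problem is again of the form \eqref{eq:MDPLP} and Proposition~\ref{prop:deterministic} applies. The problem is that the obstacle you flag at the end is not a technicality you can check away --- it is exactly where the claim breaks for the remaining variations. Variation~1 is $\sum_{t,s,a}\mathrm{CD}_{ts}\,x_{tsa}\le\tau$ and Variation~2 is $\tfrac{1}{|T|}\sum_{t}\sum_{s:e=1}\sum_{a}x_{tsa}\le\mathcal{P}_{year}$ (likewise Variation~3 with $\mathcal{P}_{year}>0$). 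These are genuine aggregate budget constraints coupling many triples; they cannot be rewritten as $x_{tsa}=0$ on an unsafe set without changing the feasible region, so they do not define a sub-MDP and your group~(ii) argument does not reach them. Your fallback --- that the budget might be slack at the optimum --- is also unavailable: these constraints are imposed precisely to alter the risk-neutral solution, and the paper's Table~\ref{tab:all_variations_concise_updated} shows they do (curtailment drops from $7{,}490$ to $4{,}372$ under Variation~1), so they are active.

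By the standard theory of constrained MDPs, appending $m$ active linear functionals of the occupation measure raises the vertex support bound from $|T||S|$ to $|T||S|+m$, which permits (and generically forces) randomization between two actions in up to $m$ time--state pairs. So for Variations~1 and~2 the correct statement is not that a deterministic optimal policy exists, but that an optimal policy exists which randomizes in at most one state per added constraint; determinism can only be recovered under extra hypotheses (e.g.\ the constraint is inactive, or an optimal vertex of the augmented polytope happens to lie at a vertex of $X$). Your proof strategy is therefore incomplete for exactly the variations that motivated the proposition, and no amount of per-variation checking along the lines you describe will close the gap --- the proposition itself needs to be weakened or qualified for the aggregate-constraint cases.
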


\subsection{Variation 1: A limit on the average amount of curtailed demand}
We can impose a limit on the average amount of electricity demand curtailment over a specific time horizon, such as individual days or the entire year.
This approach is particularly useful when grid operators aim to manage excess demand through demand response programs. Given that the available capacity of these programs is known, operators can strategically constrain average curtailment to ensure that only a manageable portion will be left unmet.

\begin{itemize}
    \item \textbf{Daily constraint:} Limit on the average amount of demand curtailment in each day $n$:
          \begin{align}
               & \sum_{t = (24n - 23)}^{24n} \sum_{s \in S} \sum_{a \in A} \text{CD}_{ts} \cdot x_{tsa} \leq \tau_{day} \nonumber \text{ for } n = 1, \ldots ,365; \\
          \end{align}

    \item \textbf{Annual constraint:} Limit the average demand curtailment across the entire year:
          \begin{align}
               & \sum_{t = 1}^{|T|=8760} \sum_{s \in S} \sum_{a \in A} \text{CD}_{ts} \cdot x_{tsa} \leq \tau_{year}; \nonumber \\
          \end{align}

\end{itemize}

Figure \ref{fig:timedependentwithextra} shows the action plans resulting from this variation using annual constraint with $\tau=43.8$ \,GW. As shown, imposing a constraint on average curtailed demand leads to more conservative operational decisions.
Table \ref{tab:all_variations_concise_updated} presents a detailed comparison of simulation results, highlighting the effects of this variation on both total system costs and curtailed demand.

\begin{figure}[t]
\centering
\begin{minipage}{.5\textwidth}
    \centering
    \includegraphics[width=1\textwidth]{Final_Pictures/Risk-neutral-January-15.png}
\end{minipage}%
\begin{minipage}{.5\textwidth}
    \centering
    \includegraphics[width=1\textwidth]{Final_Pictures/Risk-neutral-July-15.png}
\end{minipage}
\begin{minipage}{.5\textwidth}
    \centering
    \includegraphics[width=1\textwidth]{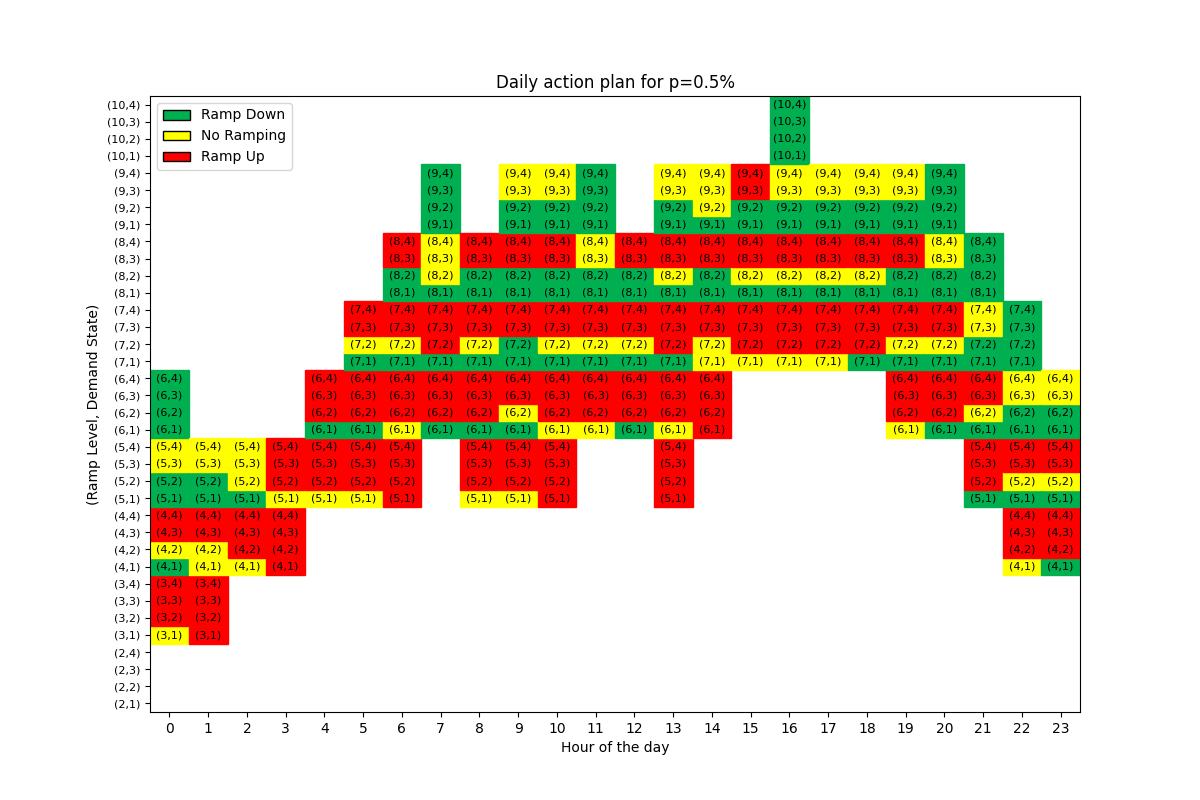}
\end{minipage}%
\begin{minipage}{.5\textwidth}
    \centering
    \includegraphics[width=1\textwidth]{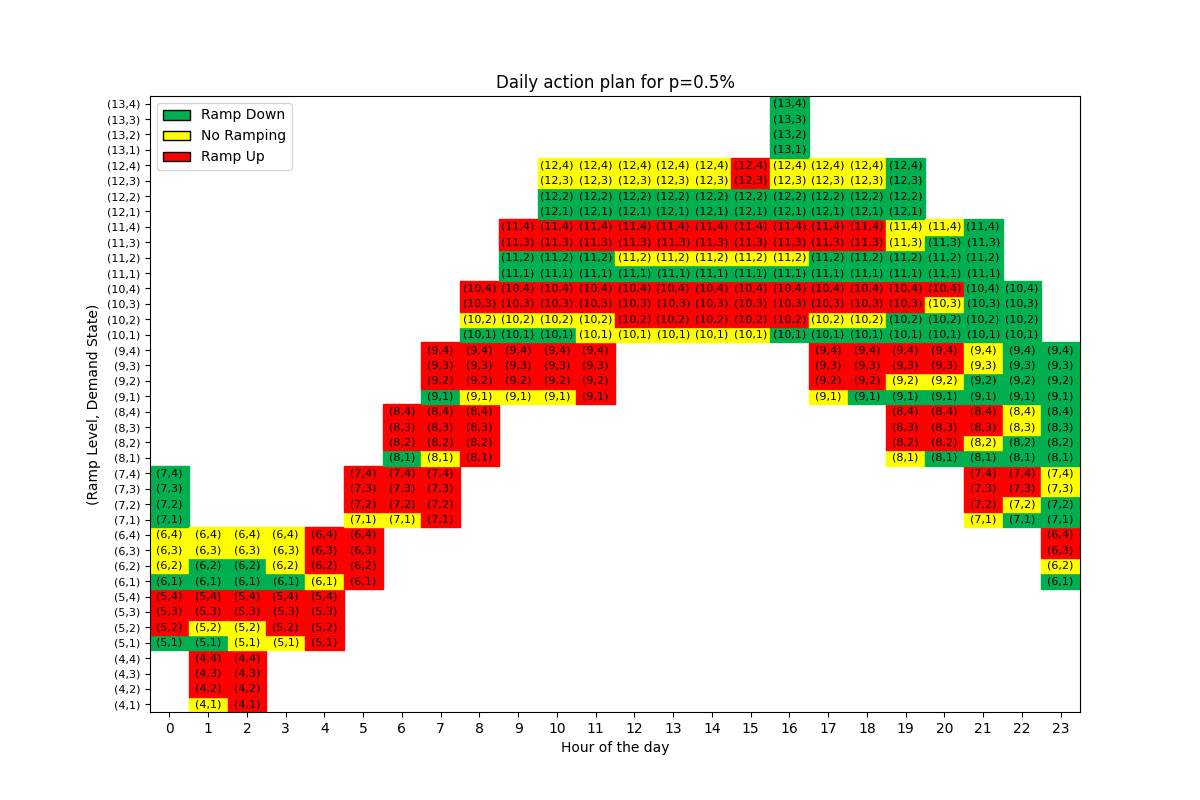}
\end{minipage}

\caption{Effect of an annual cap on average demand curtailment (Variation 1). Daily action plans for Jan. 15 (left) and Jul. 15 (right). The top row shows the baseline (unconstrained) risk-neutral policy, while the bottom row enforces the annual curtailment constraint with $\tau_{\text{year}}=\SI{43.8}{\giga\watt}$, limiting the average amount of unmet demand over the year.}
\label{fig:timedependentwithextra}
\end{figure}


\subsection{Variation 2: A limit on the probability of being in a curtailed state}

Another risk aversion approach involves limiting the probability of the system entering a state associated with demand curtailment. This probability constraint can be applied for specific days or the whole year. 
To implement this, first we define a binary parameter $e_{t}$ that indicates whether curtailment occurs at time $t$.

\begin{itemize}
    \item $e_{t} =
              \begin{cases}
                  1 & \text{if } \text{CD}_{ts} > 0, \\
                  0 & \text{otherwise}
              \end{cases}
              \quad \forall t \in T, \quad \forall (\ell, r) \in S $
    \item $s=(\ell,r,e_{t})$ state of MDP
\end{itemize}

As $e_{t}$ is a parameter and not a decision variable, incorporating it into the formulation does not change the linear nature of the problem. 
Depending on the desired time frame, the following constraints can be used to limit the frequency of curtailment.

\begin{itemize}
    \item \textbf{Daily constraint:} Limit on the average daily probability of demand curtailment on each day $n$:

          \begin{align}
               & \frac{1}{24} \sum_{t = (24 \cdot n - 23)}^{24 \cdot n} \sum_{s = (\ell,r,e=1) \in S} \sum_{a \in A} x_{tsa} \leq \mathcal{P}_{day} \nonumber \text{ for } n = 1, \ldots ,365; \\
          \end{align}

    \item \textbf{Annual constraint:} Limit on the average probability of demand curtailment across the year:
          \begin{align}
               & \frac{1}{|T|} \sum_{t = 1}^{|T|=8760} \sum_{s = (\ell,r,e=1) \in S} \sum_{a \in A}  x_{tsa} \leq \mathcal{P}_{year}; \nonumber \\
              \label{eq:Var2}
          \end{align}
\end{itemize}

Figure \ref{fig:timedependentwithy} compares this variation with the original plans for the time-dependent plan.
Table \ref{tab:all_variations_concise_updated} 
presents a detailed comparison of simulation results, highlighting the effects of the curtailment constraint on both system costs and curtailed demand.

\begin{figure}[t]
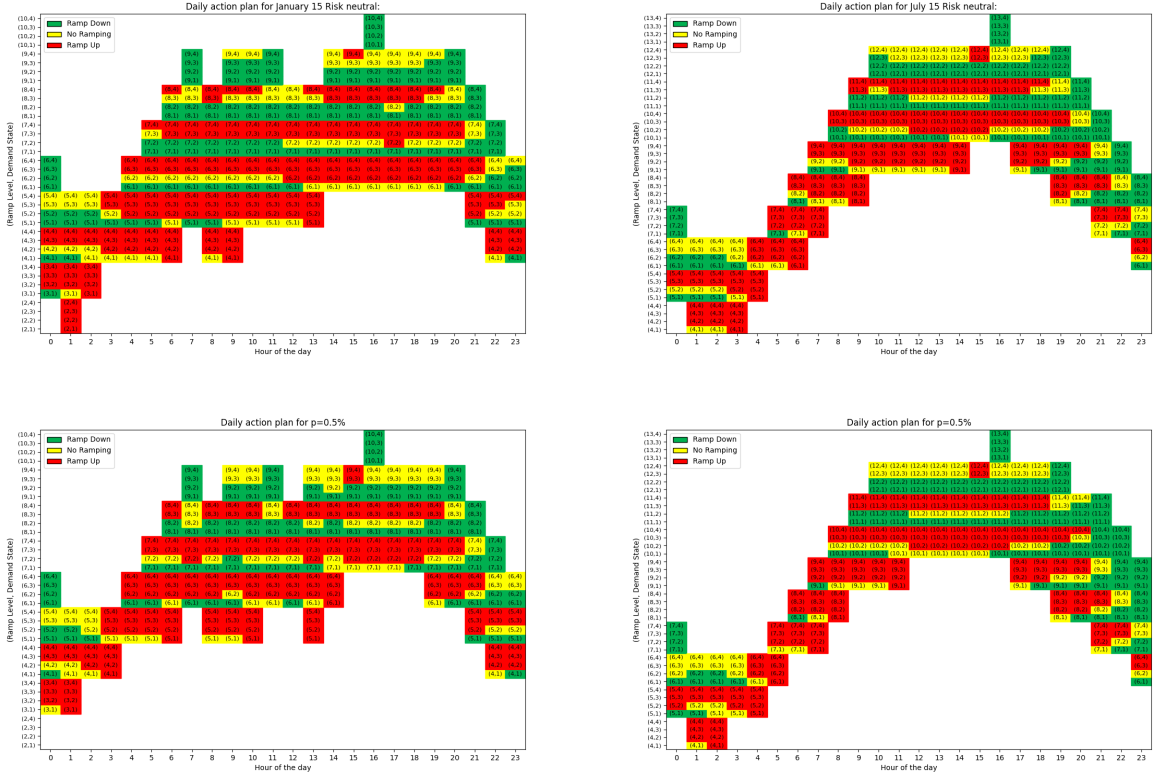

    \centering
    \begin{minipage}{.5\textwidth}
        \centering
        \includegraphics[width=1\textwidth]{Final_Pictures/Risk-neutral-January-15.png}
    \end{minipage}%
    \begin{minipage}{.5\textwidth}
        \centering
        \includegraphics[width=1\textwidth]{Final_Pictures/Risk-neutral-July-15.png}
    \end{minipage}

    \begin{minipage}{.5\textwidth}
        \centering
        \includegraphics[width=1\textwidth]{Final_Pictures/R4-y-equals-8760-times-0.5-percent-January-15.png}
    \end{minipage}%
    \begin{minipage}{.5\textwidth}
        \centering
        \includegraphics[width=1\textwidth]{Final_Pictures/R4-y-equals-8760-times-0.5-percent-July-15.png}
    \end{minipage}

    \caption{Effect of an annual cap on the probability of curtailment (Variation 2). Daily action plans for Jan. 15 (left) and Jul. 15 (right). The top row shows the baseline (unconstrained) risk-neutral policy, while the bottom row enforces the annual curtailment-state probability constraint (17), limiting the fraction of hours in which curtailment occurs to ($\mathcal{P}_{\text{year}}=0.5\%)$}
    \label{fig:timedependentwithy}
\end{figure}


\subsection{Variation 3: A limit on the probability of consecutive curtailment of demand}
In a real-world grid, when a curtailment in electricity demand occurs for just one hour, it may be considered a minor issue, even if it leads to a temporary power outage.

While undesirable, individuals can continue their activities, relying on batteries for essential tasks. However, the situation becomes significantly more severe when power demand curtailments last more than a single hour, especially when they occur consecutively. It is easier for grid operators to address load curtailment for one hour with a demand response program, but consecutive curtailments greatly increase the risk of prolonged power outages.

Additionally, when they are consecutive, they can disrupt people's daily routines, from heating and cooling their homes to preserving their food in refrigerators, and are especially detrimental to vulnerable populations. This situation can lead to food spoilage and unsafe living conditions. Hospitals and other facilities must also use backup systems like generators to keep the essential systems running, risking people's lives as outages last longer than one consecutive hour. Lastly, the continuity of such outages increases psychological stress among individuals and communities.

To address these challenges, we redesigned our original MDP states to provide an approach restricting consecutive electricity demand curtailments.\\
\\
Let $z$ be the number of consecutive curtailment periods until now, so that

\begin{itemize}
    \item $z_t =
              \begin{cases}
                  min(z_{t-1} + 1,n) & \text{if curtailment occurs at time } t \\
                  0                  & \text{otherwise}
              \end{cases}$
\end{itemize}

\begin{itemize}
    \item $n =$ the number of consecutive curtailments of demand to restrict
    \item $s=(\ell,r,z_t)$ state of MDP
\end{itemize}

\begin{itemize}
    \item \textbf{Annual constraint:} Limit on $n$ or more consecutive curtailment of demand across the year:
          \begin{align}
               & \frac{1}{|T|} \sum_{t \in T} \sum_{(\ell,r,z_{t} \geq n)\in S} \sum_{a \in A} x_{tsa} \leq \mathcal{P}_{year} \nonumber \\
          \end{align}
          \label{eq:Var3}
\end{itemize}

For example, we can prevent two or more, or three or more consecutive hours of electricity demand curtailment by setting $\mathcal{P}=0$ in the corresponding constraint. Figure~\ref{fig:ConsecutiveCurtailmentsTimeDependent} presents action plans under four scenarios: one with no demand curtailment allowed, two with restrictions on two and three consecutive curtailments, respectively, and one baseline scenario with no curtailment constraints. It is important to note that the scenario with no curtailment permitted throughout the entire year was infeasible under the existing setup of 13 CCGTs, achieving feasibility required one additional CCGT. Table \ref{tab:all_variations_concise_updated} compares the total curtailed demand and the total costs associated with the system under each scenario.

\subsection{Variation 4: A penalty for consecutive curtailment of demand in the cost function}

Instead of enforcing a hard constraint on consecutive curtailments of electricity demand, an alternative approach is to incorporate penalties for consecutive curtailments that occur directly into the cost function. This penalty should be convex on the number of consecutive curtailment periods. Unlike the previous variations, which introduced additional constraints to the primary framework, this variation changes the immediate costs attached to each state. Nevertheless, it is relevant to include in this section, as it still serves our goal of restricting consecutive curtailment of demand.

This is achieved by introducing a penalty $c_3(z)$, which is added to the base cost of each state and increases with the number of consecutive curtailments $z$. This provides a flexible trade-off. Under this formulation, the system may occasionally permit consecutive curtailments but discourages them by incurring additional penalties. The resulting LP reflecting this approach will be as follows.

\[
    \text{new } c_{tsa} = \text{old } c_{tsa} + c_{3}(z_t)
\]

Additionally, it is possible to combine both of these approaches, having a penalty as well as a hard constraint on curtailment.

\clearpage
\thispagestyle{empty}
\begin{figure}[p]
        \centering
        \begin{minipage}{.49\textwidth}
            \centering
            \includegraphics[width=1\textwidth]{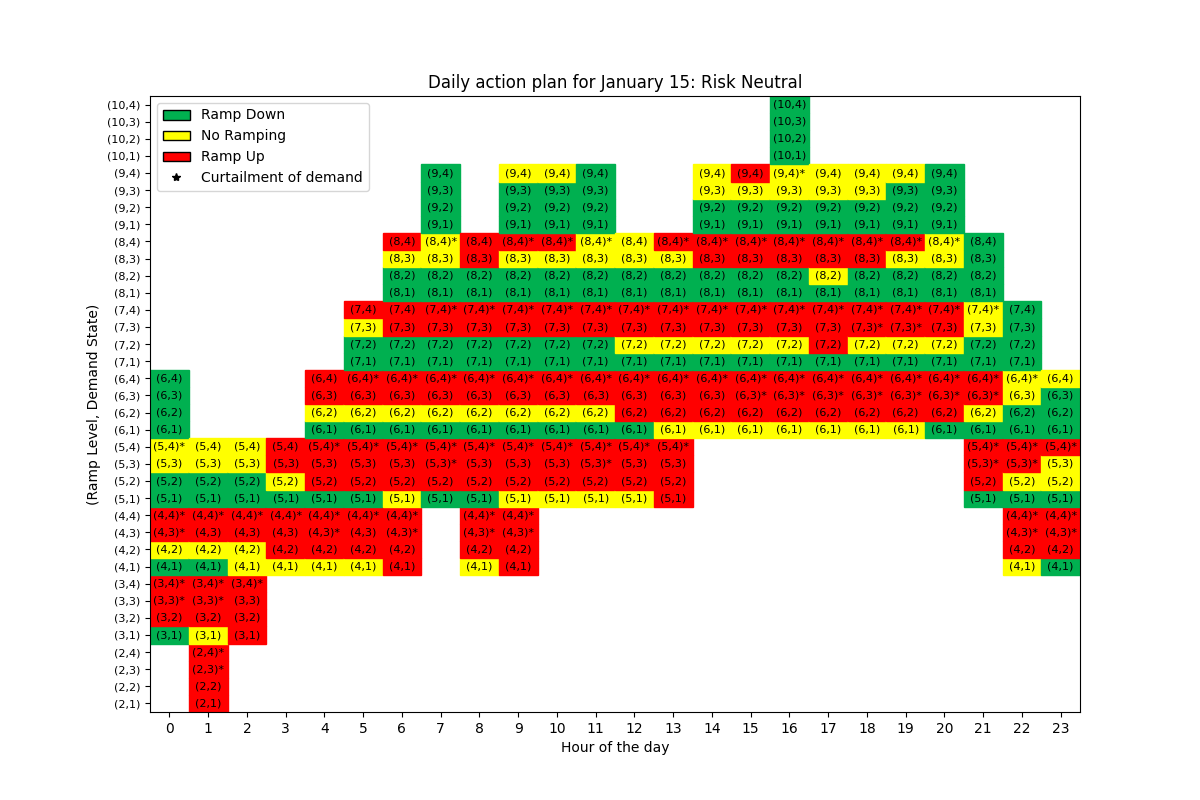}
        \end{minipage}
        \begin{minipage}{.49\textwidth}
            \centering
            \includegraphics[width=1\textwidth]{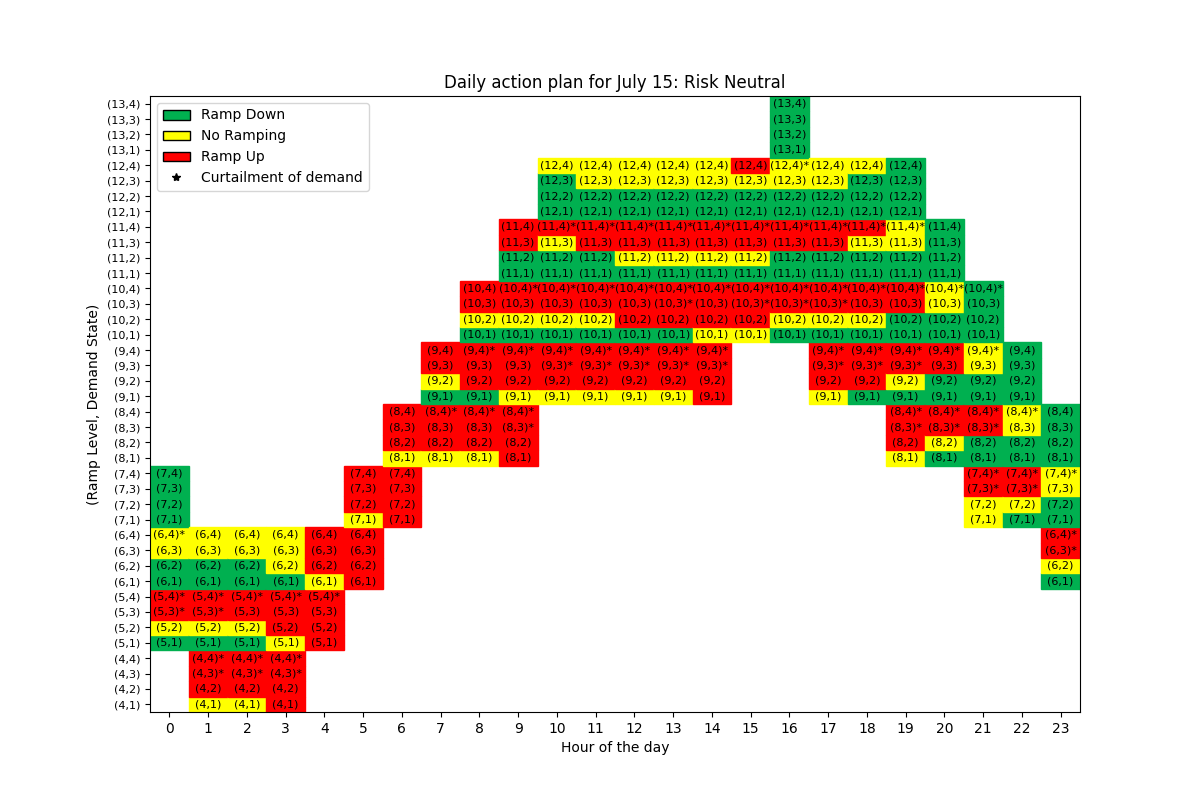}
        \end{minipage}

        \begin{minipage}{.49\textwidth}
            \centering
            \includegraphics[width=1\textwidth]{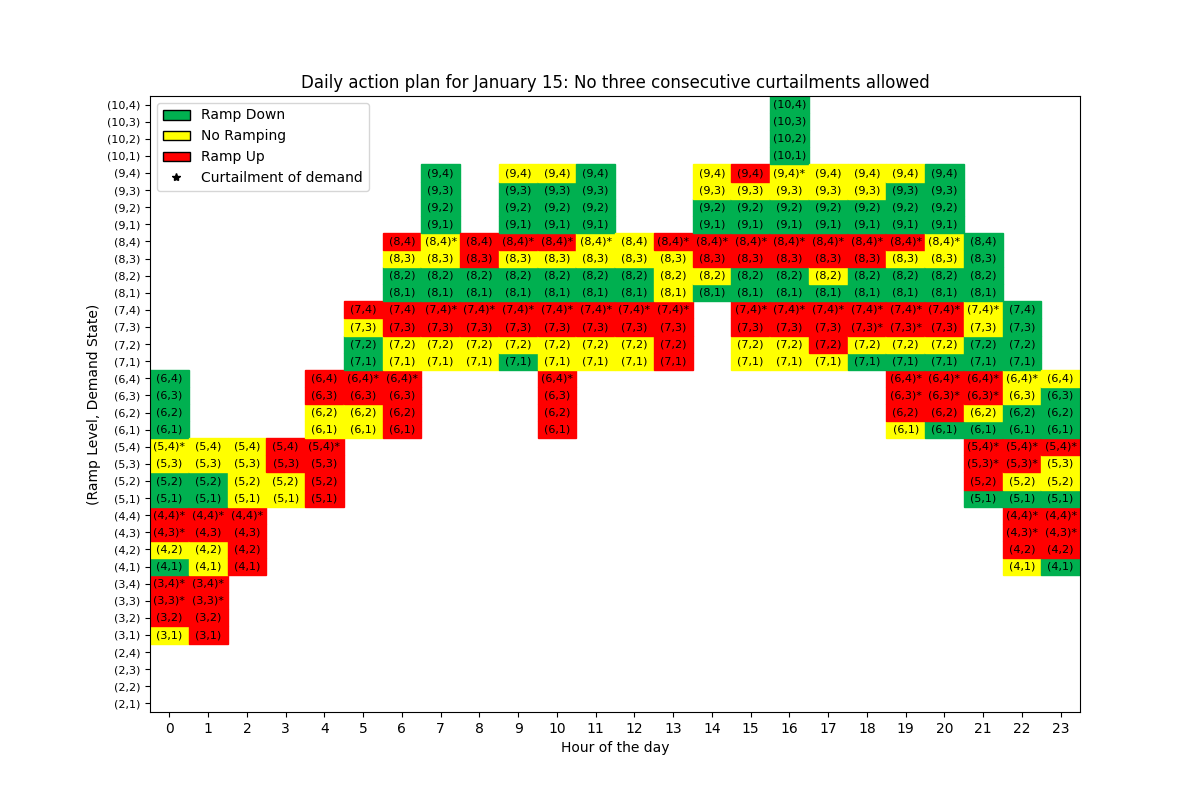}
        \end{minipage}%
        \begin{minipage}{.49\textwidth}
            \centering
            \includegraphics[width=1\textwidth]{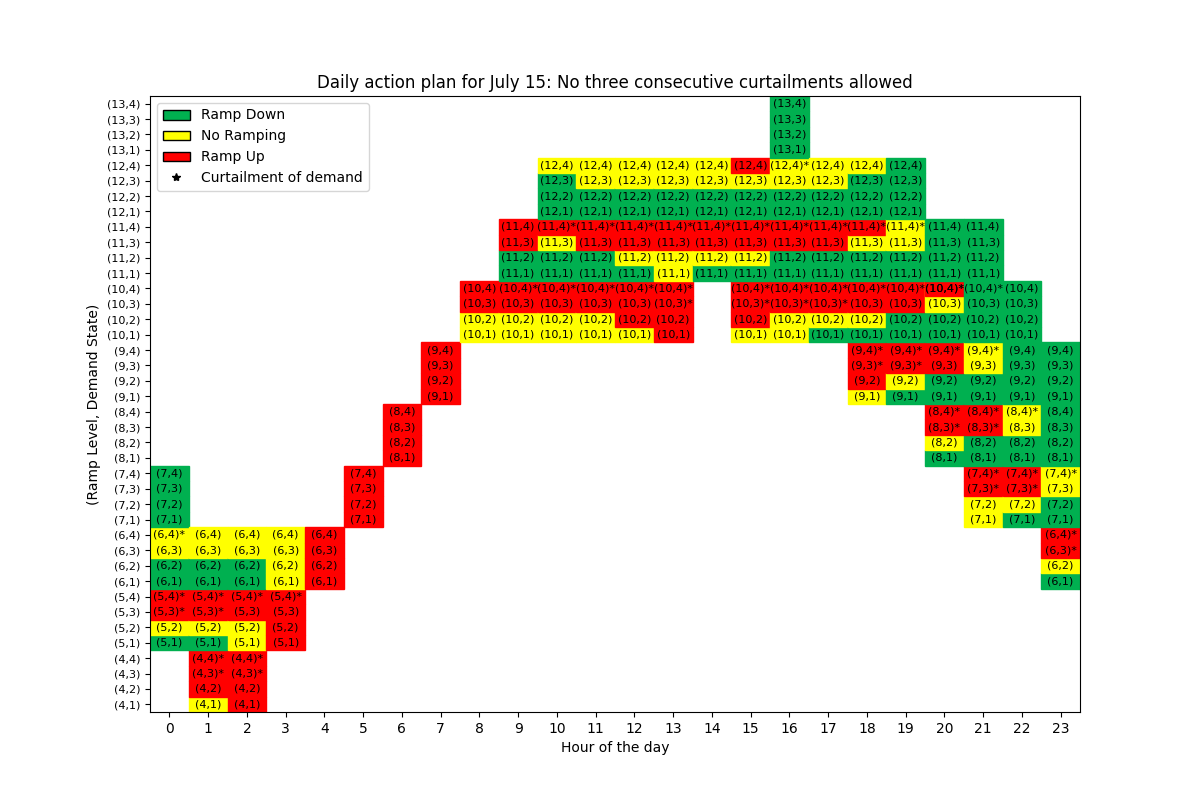}
        \end{minipage}
    
        \begin{minipage}{.49\textwidth}
            \centering
            \includegraphics[width=1\textwidth]{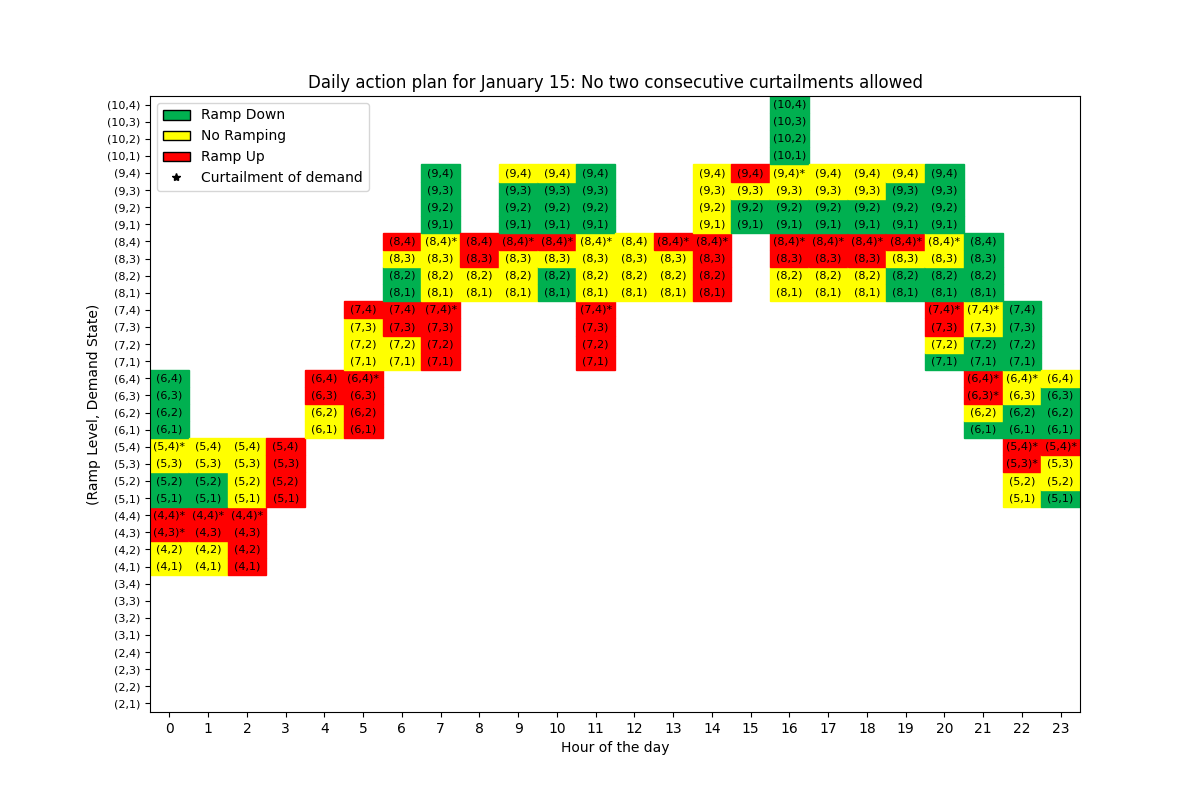}
        \end{minipage}%
        \begin{minipage}{.49\textwidth}
            \centering
            \includegraphics[width=1\textwidth]{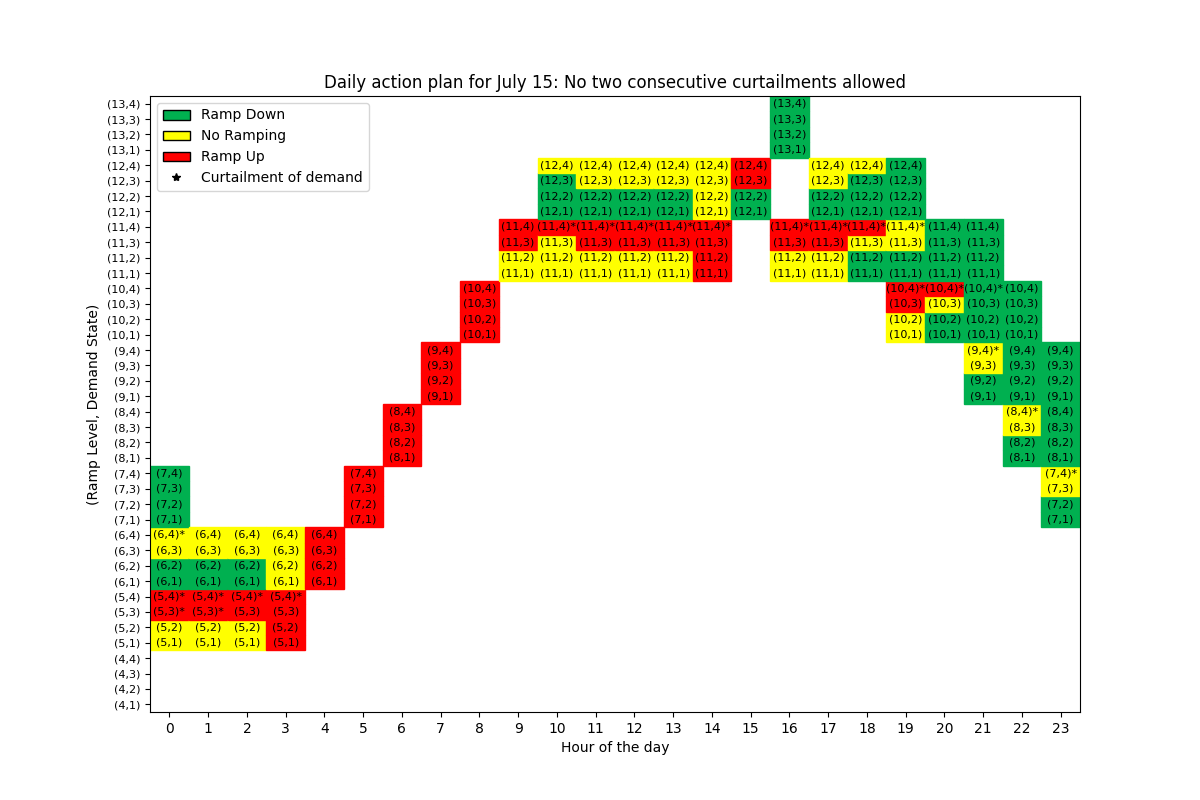}
        \end{minipage}
    
        \begin{minipage}{.49\textwidth}
            \centering
            \includegraphics[width=1\textwidth]{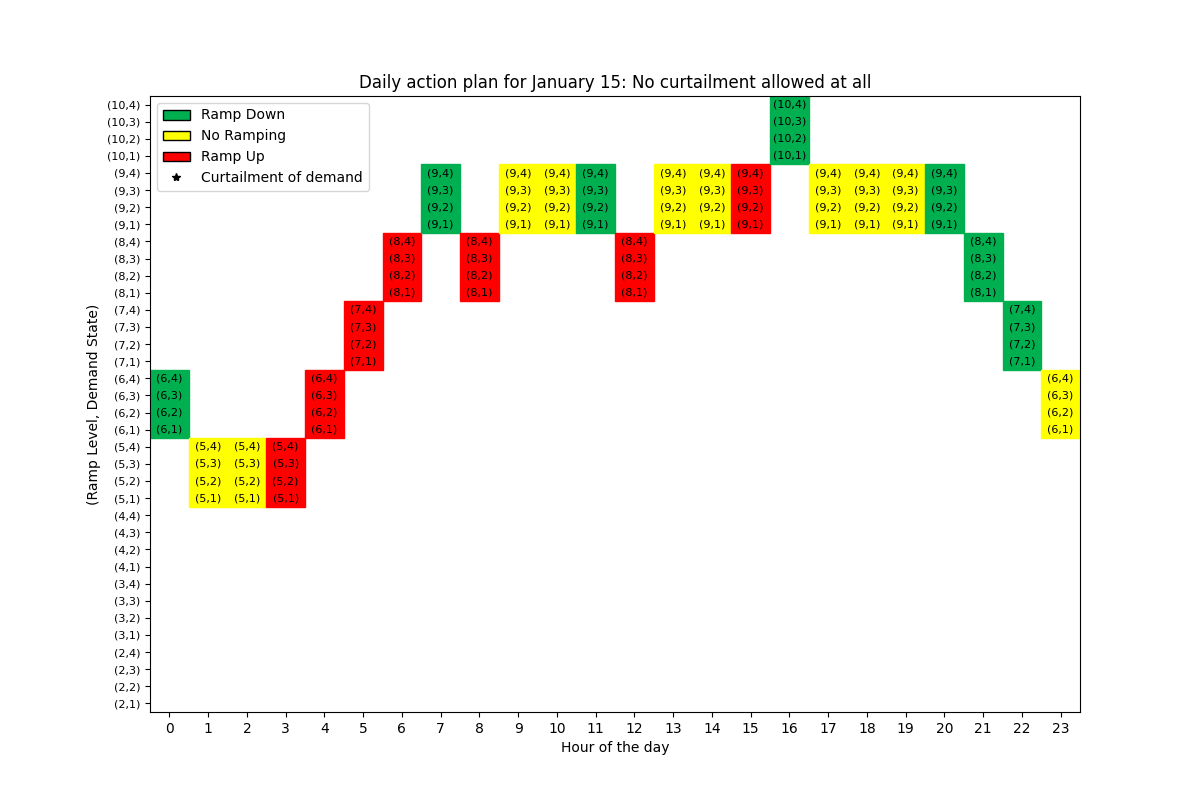}
        \end{minipage}%
        \begin{minipage}{.49\textwidth}
            \centering
            \includegraphics[width=1\textwidth]{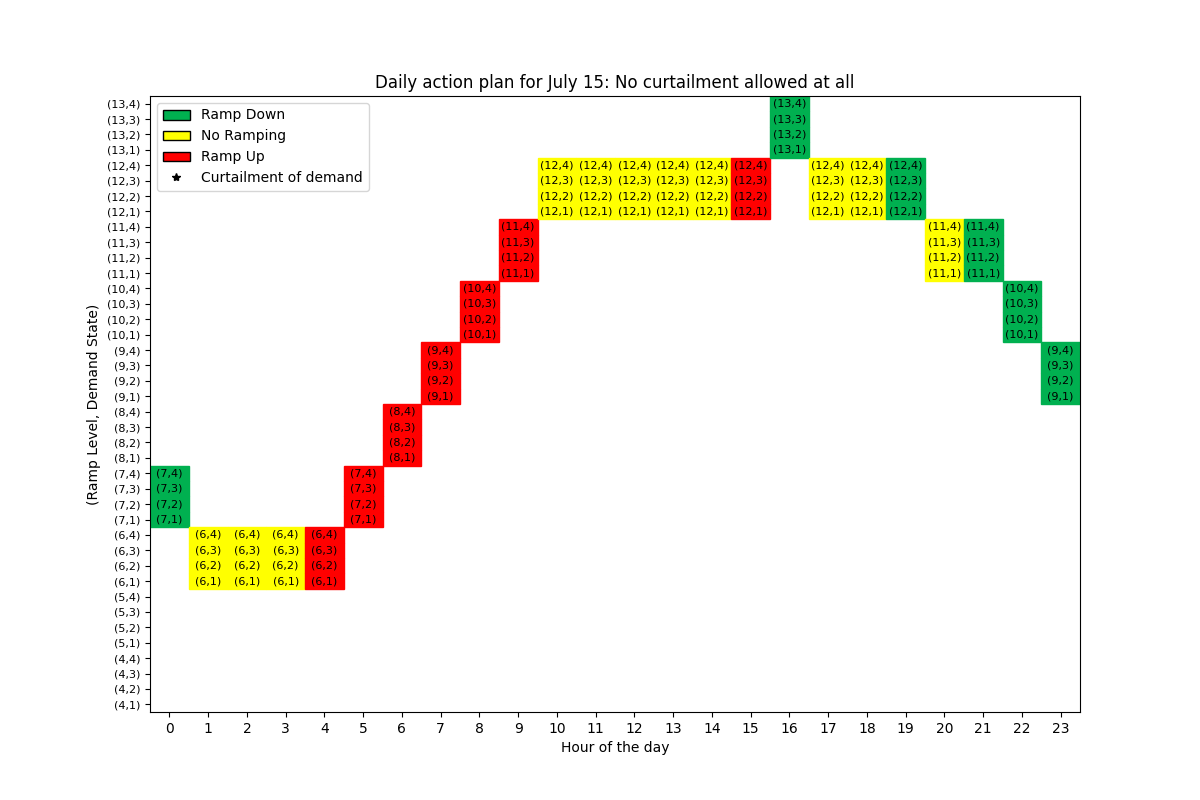}
        \end{minipage}
        
    \caption{Effect of limiting consecutive demand curtailment (Variation 3). Daily action plans for Jan. 15 (left) and Jul. 15 (right) under four scenarios (top to bottom): baseline (no consecutive-curtailment constraint), no three consecutive curtailment hours allowed, no two consecutive curtailment hours allowed, and no curtailment allowed at all.}
    \label{fig:ConsecutiveCurtailmentsTimeDependent}
\end{figure}
\clearpage



\color{lightgray} 

%

\color{black} 



{\scriptsize
        \begin{longtable}{c|cc|cc|cc|cc|cc|cc|cc}
            \caption{Comprehensive comparison of the CVaR approach and chance constraint variations with the risk-neutral approach}                                                                                                                  \\
            \toprule
            \multirow{2}{*}{Year}
                 & \multicolumn{2}{c}{Base LP: Risk-Neutral}
                 & \multicolumn{2}{c}{CVaR: Beta = 0.9}
                 & \multicolumn{2}{c}{CVaR: Beta = 0.99}
                 & \multicolumn{2}{c}{Var 1: $\tau=43.8$}
                 & \multicolumn{2}{c}{Var 2: $\mathcal{P}=0.005$}
                 & \multicolumn{2}{c}{Var 3: 3 CC NA}
                 & \multicolumn{2}{c}{Var 3: 2 CC NA}                                                                                                                                                      \\
            \cmidrule(r){2-3} \cmidrule(r){4-5} \cmidrule(r){6-7} \cmidrule(r){8-9} \cmidrule(r){10-11} \cmidrule(r){12-13} \cmidrule(r){14-15}
                 & CD                                             & Cost             & CD  & Cost   & CD  & Cost   & CD  & Cost   & CD    & Cost   & CD    & Cost   & CD  & Cost                           \\
            \midrule
            \endfirsthead
            \toprule
            \multirow{2}{*}{Year}
                 & \multicolumn{2}{c}{Base LP: Risk-Neutral}
                 & \multicolumn{2}{c}{CVaR: Beta = 0.9}
                 & \multicolumn{2}{c}{CVaR: Beta = 0.99}
                 & \multicolumn{2}{c}{Var 1: $\tau=43.8$}
                 & \multicolumn{2}{c}{Var 2: $\mathcal{P}=0.005$}
                 & \multicolumn{2}{c}{Var 3: 3 CC NA}
                 & \multicolumn{2}{c}{Var 3: 2 CC NA}                                                                                                                                                      \\
            \cmidrule(r){2-3} \cmidrule(r){4-5} \cmidrule(r){6-7} \cmidrule(r){8-9} \cmidrule(r){10-11} \cmidrule(r){12-13} \cmidrule(r){14-15}
                 & CD                                             & Cost             & CD  & Cost   & CD  & Cost   & CD  & Cost   & CD    & Cost   & CD    & Cost   & CD  & Cost                           \\
            \midrule
            \endhead
            \midrule
            \endfoot
            \bottomrule
            \multicolumn{15}{l}{\footnotesize \textbf{Abbreviations:}}                                                                                                                                     \\
            \multicolumn{15}{l}{\footnotesize \textbf{Var:} Variation, \quad  \textbf{CD:} Curtailed Demand (in \,GW), \quad \textbf{Cost:} Total cost of operation over the planning horizon.}            \\
            \multicolumn{15}{l}{\footnotesize \textbf{Base LP:} Risk-neutral baseline MDPLP~\eqref{eq:MDPLP} minimizing the expected cost.}                                                      \\
            \multicolumn{15}{l}{\footnotesize \textbf{Beta:} CVaR confidence level; hedging against the worst $(1 - \beta)$ tail of the cost distribution. Higher $\beta$ implies stronger risk aversion.} \\
            \multicolumn{15}{l}{\footnotesize \textbf{Var 1:} MDPLP~\eqref{eq:MDPLP} with \textit{$\tau = 43.8 \,GW$} as an annual average curtailed demand cap.}                                                               \\
            \multicolumn{15}{l}{\footnotesize \textbf{Var 2:} MDPLP~\eqref{eq:MDPLP} with probabilistic curtailment limit $\mathcal{P}_{year}=0.005$ as \eqref{eq:Var2}.}       \\
            \multicolumn{15}{l}{\footnotesize \textbf{Var 3:} MDPLP~\eqref{eq:MDPLP} preventing consecutive curtailments of electricity demand with:}            \\
            \multicolumn{15}{l}{\footnotesize \quad \textit{3 CC NA:} Three consecutive curtailments or more are not allowed. \quad \textit{2 CC NA:} Two consecutive curtailments or more are not allowed.} 
            \\
            \multicolumn{15}{l}{\footnotesize Year 2014 is omitted because the available data are not comparable.}

            \endlastfoot

            2006 & 710                                            & 12,607           & 485 & 13,757 & 202 & 16,015 & 581 & 12,655 & 574   & 12,551 & 673   & 12,722 & 617 & 12,887                         \\
            2007 & 178                                            & 3,696            & 73  & 4,096  & 13  & 4,798  & 125 & 3,707  & 125   & 3,668  & 156   & 3,731  & 133 & 3,805                          \\
            2008 & 453                                            & 10,095           & 259 & 10,919 & 73  & 12,703 & 362 & 10,179 & 359   & 10,122 & 425   & 10,180 & 383 & 10,336                         \\
            2009 & 248                                            & 10,089           & 167 & 11,522 & 65  & 14,021 & 192 & 10,422 & 194   & 10,300 & 242   & 10,288 & 203 & 10,553                         \\
            2010 & 645                                            & 14,489           & 437 & 16,179 & 223 & 19,255 & 538 & 14,746 & 521   & 14,599 & 614   & 14,694 & 556 & 14,991                         \\
            2011 & 267                                            & 8,257            & 168 & 9,045  & 72  & 10,967 & 203 & 8,358  & 212   & 8,310  & 247   & 8,317  & 215 & 8,430                          \\
            2012 & 387                                            & 12,010           & 246 & 13,259 & 94  & 16,082 & 318 & 12,247 & 322   & 12,177 & 374   & 12,155 & 345 & 12,412                         \\
            2013 & 378                                            & 11,157           & 253 & 12,391 & 149 & 14,993 & 297 & 11,387 & 300   & 11,284 & 352   & 11,274 & 319 & 11,522                         \\
            2015 & 49                                             & 3,508            & 17  & 4,321  & 0   & 5,410  & 28  & 3,622  & 30    & 3,618  & 37    & 3,592  & 25  & 3,710                          \\
            2016 & 274                                            & 13,245           & 173 & 15,309 & 49  & 18,671 & 217 & 13,687 & 217   & 13,564 & 256   & 13,501 & 226 & 13,909                         \\
            2017 & 148                                            & 11,724           & 70  & 13,871 & 11  & 17,240 & 96  & 12,183 & 102   & 12,062 & 128   & 11,956 & 107 & 12,389                         \\
            2018 & 296                                            & 13,181           & 202 & 15,374 & 105 & 18,773 & 236 & 13,667 & 237   & 13,515 & 282   & 13,470 & 256 & 13,912                         \\
            2019 & 113                                            & 12,576           & 41  & 14,792 & 16  & 18,419 & 65  & 13,115 & 67    & 12,969 & 100   & 12,836 & 68  & 13,271                         \\
            2020 & 105                                            & 11,948           & 44  & 14,104 & 29  & 17,449 & 64  & 12,461 & 61    & 12,324 & 82    & 12,163 & 65  & 12,611                         \\
            \hline
            2021 & 1,749                                          & 16,151           & 170 & 14,884 & 25  & 18,160 & 665 & 14,001 & 1,009 & 14,721 & 1,396 & 15,549 & 513 & 13,917                         \\
            2022 & 997                                            & 14,144           & 86  & 14,959 & 6   & 18,522 & 276 & 13,110 & 454   & 13,319 & 765   & 13,918 & 222 & 13,325                         \\
            2023 & 493                                            & 8,213            & 37  & 8,748  & 0   & 10,932 & 109 & 7,724  & 212   & 7,792  & 360   & 8,059  & 77  & 7,807                          \\
            \hline
            \hline
            \textbf{Total}
                 & \textbf{7,490}                                 & \textbf{187,090}
                 & \textbf{2,928}                                 & \textbf{207,530}
                 & \textbf{1,132}                                 & \textbf{252,410}
                 & \textbf{4,372}                                 & \textbf{187,271}
                 & \textbf{4,996}                                 & \textbf{186,895}
                 & \textbf{6,489}                                 & \textbf{188,405}
                 & \textbf{4,330}                                 & \textbf{189,787}                                                                                                                       \\
            \label{tab:all_variations_concise_updated}
        \end{longtable}
}

Imposing chance constraints systematically shifts the action plans toward more conservative operation, with earlier ramp-ups and fewer sustained ramp-down periods during critical hours. These patterns align with the cost–curtailment comparisons in Table~\ref{tab:all_variations_concise_updated}.

\section*{Acknowledgment}
\vspace{-0.2em}
This material is based upon work supported by the U.S. Department of Energy’s Office of Energy Efficiency and Renewable Energy (EERE) under the Wind Energy Technologies Office (WETO) Award Number DE-EE0011269, the Massachusetts Clean Energy Center and the Maryland Energy Administration. The views expressed herein do not necessarily represent the views of the U.S. Department of Energy, the United States Government, the Massachusetts Clean Energy Center or the Maryland Energy Administration.

\clearpage
\FloatBarrier
\bibliographystyle{apalike-ejor}
\bibliography{references}

\end{document}